\newcommand{\shrinkmargins}[1]{
  \addtolength{\textheight}{#1\topmargin}
  \addtolength{\textheight}{#1\topmargin}
  \addtolength{\textwidth}{#1\oddsidemargin}
  \addtolength{\textwidth}{#1\evensidemargin}
  \addtolength{\topmargin}{-#1\topmargin}
  \addtolength{\oddsidemargin}{-#1\oddsidemargin}
 \addtolength{\evensidemargin}{-#1\evensidemargin}
  }
\theoremstyle{plain}
\newtheorem{theorem}{Theorem}[section]
\newtheorem{corollary}[theorem]{Corollary}
\newtheorem{lemma}[theorem]{Lemma}
\newtheorem{proposition}[theorem]{Proposition}
\newtheorem{question}[theorem]{Question}
\newtheorem*{teo}{Theorem}
\newtheorem*{coro}{Corollary}
\newtheorem{definition}[theorem]{Definition}
\theoremstyle{remark}
\newtheorem{remark}[theorem]{Remark}
\theoremstyle{definition}
\def \F { \mathbb{F}}
\def \Z { \mathbb{Z}}
\def \Q { \mathbb{Q}}
\def \C { \mathbb{C}}
\def \gal { \text{Gal}}
\def \det { \text{det}}
\def\gn#1#2{{$\href{http://groupnames.org/\#?#1}{#2}$}}
\def\gn#1#2{$#2$}  
\tikzset{sgplattice/.style={inner sep=1pt,norm/.style={red!50!blue},char/.style={blue!50!black},
  lin/.style={black!50}},cnj/.style={black!50,yshift=-2.5pt,left=-1pt of #1,scale=0.5,fill=white}}
\begin{document}

\thispagestyle{empty}
\setcounter{tocdepth}{1}

\title{On a question of Perlis and Stuart regarding arithmetic equivalence. }
\author{Guillermo Mantilla-Soler}
\date{}

\maketitle

\begin{abstract}
Let $K$ be a number field. The $K$-arithmetic type of a rational prime $\ell$ is the tuple $A_{K}(\ell)=(f^{K}_{1},...,f^{K}_{g_{\ell}})$ of the residue degrees of $\ell$ in $K$, written in ascending order. A well known result of Perlis from the 70's states that two number fields have the same Dedekind zeta function if and only if for almost all primes $\ell$ the arithmetic types of $\ell$ in both fields coincide. By the end of the 90's Perlis and Stuart asked if having the same zeta function implies that for ramified primes the sum of the ramification degrees coincide. Here we study and answer their question for septic number fields.
\end{abstract}

\section{Introduction}

Two number fields are called {\it arithmetically equivalent} if they have the same Dedekind zeta function. It is of continuous interest to several authors, see for instance recent works by \cite{Cornelissen}, \cite{Prasad}, \cite{Cornelissen2} and others, to study arithmetic equivalence in number fields, and their geometric counterparts from the point of view of function fields. \\

\noindent Let $K$ be a number field and let $\ell$ be a rational prime. Recall that the {\it arithmetic type} of $\ell$ in $K$  is the ordered tuple \[A_{K}(\ell):=(f^{K}_{1},...,f^{K}_{g^{K}})\] where the $f^{K}_{i}$'s are the residue degrees of $\ell$ in $K$ and \[f^{K}_{1} \leq ...\leq f^{K}_{g^{K}}.\] Let $e^{K}_{i}$ be the ramification degree of $\ell$ corresponding to the residue degree $f^{K}_{i}$. In the early 70's Perlis showed that the Dedekind zeta function of a number field is completely determined by the residue degrees over every rational prime. Further, he gave a group theoretic characterization for the equivalence to occur:\\

\noindent Suppose that $K,K'$ are two number fields and let $N$ be the compositum of their Galois closures. Let $G={\rm Gal}(N/\Q)$, and let $H$, $H_{1}$ be the corresponding subgroups of $K$ and $K'$ via Galois

\[\xymatrix{  
     & N \ar@{-}[dd]^{G}\ar@{-}[ld]_{H={\rm Gal}(N/K)}\ar@{-}[rd]^{H_{1}={\rm Gal}(N/K')}   & \\
             K\ar@{-}[rd] & &  K' \ar@{-}[ld] \\
            & \Q &   }\]
\begin{definition}
We say that  $K$ and $K'$ are quasi-conjugate if the groups $H$ and $H_{1}$ are quasi-conjugate in $G$ i.e., if for very conjugacy class $C$ of $G$ we have that \[\#(C \cap H) =\#(C \cap H_{1}).\]

\end{definition}

\begin{remark}\label{ElCuasiConjugados}
Notice that if $H$ and $H_{1}$ are conjugate subgroups in $G$ then they are quasi-conjugate. Also, since conjugacy classes are a partition of $G$ we have that $\#H=\#H_{1}$ whenever $H$ and $H_{1}$ are quasi-conjugate
\end{remark}

\begin{theorem}[\cite{Perlis1}]\label{EldePerlis} Let $K,K'$ be two number fields. Then, the following are equivalent:  

\begin{itemize}

\item[(a)] The fields $K,K'$ are arithmetically equivalent. 

\item[(b)] For almost every prime $\ell$ the arithmetic types of $\ell$ in $K$ and $K'$ are the same.

\item[(c)] The fields $K,K'$ are quasi-conjugate.
 
\end{itemize}
\end{theorem}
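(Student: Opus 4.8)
The plan is to route all three statements through a single representation-theoretic invariant, namely the permutation character $\pi_H := \mathrm{Ind}_H^G \mathbf{1}$ of $G$ acting on the coset space $G/H$, whose value at $\sigma \in G$ equals the number of cosets fixed by $\sigma$. I will prove the cycle $(a) \Rightarrow (b) \Rightarrow (c) \Rightarrow (a)$, showing along the way that each condition is equivalent to the equality of characters $\pi_H = \pi_{H_1}$. The essential dictionary is the classical fact that, for a prime $\ell$ unramified in $N$, the arithmetic type $A_{K}(\ell)$ is exactly the cycle type of a Frobenius element $\mathrm{Frob}_\ell \in G$ acting on $G/H$: the residue degrees $f^{K}_{i}$ are the lengths of the cycles, since $K$ corresponds to $H$ under the Galois correspondence.

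The implication $(a) \Rightarrow (b)$ is immediate. Writing $\zeta_K(s) = \prod_\ell \prod_{i} (1 - \ell^{-f^{K}_{i} s})^{-1}$ and comparing Dirichlet coefficients, the equality of the two zeta functions forces the equality of every local Euler factor, hence the equality of the arithmetic types at every prime, a fortiori at almost every prime.

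For $(b) \Rightarrow (c)$, which I expect to be the main obstacle, I would argue as follows. By the Chebotarev density theorem every conjugacy class $C$ of $G$ occurs as $\mathrm{Frob}_\ell$ for a set of primes $\ell$ of positive density, in particular for infinitely many unramified $\ell$; hence hypothesis $(b)$ forces, for every class $C$, the cycle type of $C$ on $G/H$ to agree with the cycle type of $C$ on $G/H_1$. A combinatorial step now upgrades this to equality of the full permutation characters: counting the fixed points of the powers $\sigma^k$ recovers, by M\"obius inversion over the divisors of the order of $\sigma$, the entire cycle-length partition of $\sigma$, so agreement of all cycle types is equivalent to $\pi_H(\sigma) = \pi_{H_1}(\sigma)$ for every $\sigma \in G$. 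Finally I invoke the fixed-point formula $\pi_H(\sigma) = \tfrac{|G|}{|C|\,|H|}\,\#(C \cap H)$, valid for $\sigma$ in the class $C$, which one proves by counting the pairs $(g,\sigma)$ with $g^{-1}\sigma g \in H$. Since quasi-conjugacy already entails $\#H = \#H_1$ (Remark~\ref{ElCuasiConjugados}), the coefficient $\tfrac{|G|}{|C|\,|H|}$ is identical for both subgroups, so $\pi_H = \pi_{H_1}$ becomes precisely the condition $\#(C\cap H) = \#(C \cap H_1)$ for all $C$, which is $(c)$.

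To close the cycle with $(c) \Rightarrow (a)$, I would read the fixed-point formula in reverse: quasi-conjugacy gives $\pi_H = \pi_{H_1}$ as characters of $G$, so the permutation representations $\mathrm{Ind}_H^G \mathbf{1}$ and $\mathrm{Ind}_{H_1}^G \mathbf{1}$ are isomorphic. By the Artin formalism one has $\zeta_K(s) = L(s, \mathrm{Ind}_H^G \mathbf{1}, N/\Q)$ and likewise for $K'$; isomorphic representations have identical Artin $L$-functions, including the factors at the finitely many ramified primes, whence $\zeta_K = \zeta_{K'}$. The delicate point throughout is the passage from information at almost all primes to an identity of class functions on all of $G$: this is exactly what Chebotarev supplies, and it is the reason a statement about splitting at unramified primes can control the global zeta function and, ultimately, constrain the ramification data at the bad primes.
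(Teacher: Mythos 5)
Your proof is correct and follows essentially the same route as the paper's Section~\S\ref{LasGalRep} treatment (Theorem~\ref{main}, Lemma~\ref{LemmaQuasi} and Corollary~\ref{ZetaQuasiConj}): the permutation character $\mathrm{Ind}_H^G\mathbf{1}$, Chebotarev, the fixed-point formula $\chi_H(C)=\#(C\cap H)\,\#G/(\#C\,\#H)$, and Artin formalism. One small wording fix: in $(b)\Rightarrow(c)$ you should not invoke ``quasi-conjugacy entails $\#H=\#H_1$'' while quasi-conjugacy is still to be proved; instead evaluate the already-established identity $\pi_H=\pi_{H_1}$ at the trivial class to get $[G:H]=[G:H_1]$ first, exactly as in the proof of Lemma~\ref{LemmaQuasi}.
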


One useful application of the above theorem is that of easily checking when a field $K$ is {\it arithmetically solitary}, i.e., there is no a field $K'$ non-isomorphic and arithmetically equivalent to $K$. For example, using this and some group theory, Perlis \cite{Perlis1} has proved that if the degree of $K$ is at most 6 then it is arithmetically solitary. Moreover if $K$ is septic and it is not arithmetically solitary then $K$ is a ${\rm PSL_{2}}(\F_{7})$ septic field i.e., the Galois group of its Galois closure is the simple group of order $168$ (see \cite{Klingen1} or \cite{Praeger}).

\subsection{The question and its answer}

In the late 90's Perlis and Stuart gave a new surprising characterization for arithmetic equivalence; They showed that it is enough to know the length of the arithmetic types, at almost every prime, to know the Dedekind zeta function. Explicitly:

\begin{theorem}[Perlis, Stuart \cite{PerStu}]
 Let $K,K'$ be number fields. Then, $K,K'$ are arithmetically equivalent if and only if for almost every prime $\ell$ the number of prime factors lying over $\ell$ in $O_{K}$ and $O_{K'}$ is the same.

\end{theorem}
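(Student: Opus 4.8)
The plan is to reduce everything to Perlis's group-theoretic criterion (Theorem~\ref{EldePerlis}) through the dictionary between prime splitting and Frobenius actions on cosets. One implication is immediate: if $K,K'$ are arithmetically equivalent, then by Theorem~\ref{EldePerlis} the full arithmetic types $A_{K}(\ell)$ and $A_{K'}(\ell)$ agree for almost every $\ell$, and in particular their lengths $g^{K}$ and $g^{K'}$ at $\ell$---which are exactly the numbers of primes above $\ell$---agree for almost every $\ell$. The whole content is therefore in the converse.

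Retaining the notation of the diagram, let $N$ be the compositum of the Galois closures, $G=\mathrm{Gal}(N/\Q)$, $H=\mathrm{Gal}(N/K)$, $H_{1}=\mathrm{Gal}(N/K')$. For a prime $\ell$ unramified in $N$, fix a Frobenius $\sigma_{\ell}\in G$; the standard correspondence between prime decomposition and Galois theory identifies the primes of $K$ above $\ell$ with the orbits of $\langle\sigma_{\ell}\rangle$ acting on the coset space $G/H$ (the residue degree being the orbit length), and likewise for $K'$ with $G/H_{1}$. Thus the hypothesis says that $\langle\sigma_{\ell}\rangle$ has the same number of orbits on $G/H$ as on $G/H_{1}$ for almost every $\ell$. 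Since the exceptional primes form a finite, hence density-zero, set while, by the Chebotarev density theorem, every conjugacy class of $G$ is realized as $\mathrm{Frob}_{\ell}$ for a set of primes of positive density, I would upgrade this to a purely group-theoretic statement: for every $g\in G$, the cyclic group $\langle g\rangle$ has the same number of orbits on $G/H$ as on $G/H_{1}$.

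Next I would introduce the permutation characters $\pi_{H}(x)=\#\{\,gH\in G/H : xgH=gH\,\}$ and $\pi_{H_{1}}$, and recall that the target---quasi-conjugacy, equivalently arithmetic equivalence by Theorem~\ref{EldePerlis}---is exactly the identity $\pi_{H}=\pi_{H_{1}}$, because $\pi_{H}(x)=\frac{|G|}{|x^{G}|\,|H|}\,\#(x^{G}\cap H)$. Applying the Cauchy--Frobenius (Burnside) orbit-counting formula to each $\langle g\rangle$ converts the statement above into $\sum_{x\in\langle g\rangle}\theta(x)=0$ for every $g$, where $\theta:=\pi_{H}-\pi_{H_{1}}$. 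Writing $n=\mathrm{ord}(g)$ and grouping the elements of $\langle g\rangle$ by the cyclic subgroup they generate gives $\sum_{d\mid n}\sum_{\langle y\rangle=C_{d}}\theta(y)=0$, where $C_{d}$ is the unique subgroup of $\langle g\rangle$ of order $d$.

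The decisive observation is a rationality property of permutation characters: if $\langle x^{a}\rangle=\langle x\rangle$ then $x$ and $x^{a}$ fix exactly the same cosets, so $\pi_{H}$, and hence $\theta$, is constant on the set of generators of any cyclic subgroup. Consequently the inner sum over the $\phi(d)$ generators of $C_{d}$ equals $\phi(d)\,\theta(y_{d})$ for any generator $y_{d}$, and the relation becomes $\sum_{d\mid n}\phi(d)\,\theta(y_{d})=0$. I would then finish by induction on $n$: the base case $n=1$ gives $\theta(1)=[G:H]-[G:H_{1}]=0$; assuming $\theta$ vanishes on all elements of order $<n$, every term with $d<n$ drops out, leaving $\phi(n)\,\theta(g)=0$, whence $\theta(g)=0$. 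As every element of $G$ generates some cyclic group, this forces $\theta\equiv 0$, i.e. $\pi_{H}=\pi_{H_{1}}$, completing the converse. The main obstacle---and the crux of the argument---is precisely this last passage: orbit counts only control the summed values $\sum_{x\in\langle g\rangle}\theta(x)$, and without the rationality property one could not isolate the individual value $\theta(g)$. It is the combination of Möbius-type inversion over the lattice of cyclic subgroups with the constancy of $\theta$ on generators that breaks the symmetry and forces $\theta$ to vanish pointwise.
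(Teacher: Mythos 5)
Your proof is correct and complete, but there is nothing in the paper to compare it against: the paper states this theorem without proof, quoting it from \cite{PerStu}. The closest result the paper actually proves is Theorem \ref{main} together with Corollary \ref{ElResultado}, which characterizes arithmetic equivalence by the number of \emph{degree-one} primes $a_{\ell}(K)$; since that quantity is the number of fixed points of Frobenius on $G/H$, i.e.\ the trace of $\rho_{K}(\mathrm{Frob}_{\ell})$, the paper gets away with Chebotarev plus the fact that a complex representation of a finite group is determined by its character. Your statement is genuinely harder: the total number of primes above $\ell$ is the number of \emph{orbits} of $\langle\sigma_{\ell}\rangle$ on $G/H$, so Burnside only hands you the averaged quantities $\frac{1}{n}\sum_{x\in\langle g\rangle}\pi_{H}(x)$ rather than the character values themselves. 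Your two extra ingredients --- the rationality property that $x$ and $x^{a}$ fix the same cosets whenever $\langle x^{a}\rangle=\langle x\rangle$, and the induction on $\mathrm{ord}(g)$ that isolates the top term $\phi(n)\,\theta(g)$ in $\sum_{d\mid n}\phi(d)\,\theta(y_{d})=0$ --- are exactly what is needed to recover $\pi_{H}=\pi_{H_{1}}$ pointwise from orbit counts, after which Lemma \ref{LemmaQuasi} and Theorem \ref{EldePerlis} finish the job; this is in substance the original Perlis--Stuart argument. In a final write-up I would only make two small points explicit: (i) the orbit count of $\langle\sigma_{\ell}\rangle$ on $G/H$ depends only on the conjugacy class of $\sigma_{\ell}$, so Chebotarev (each class realized with positive density, versus a finite exceptional set) really does upgrade the hypothesis to all $g\in G$; and (ii) the base case $g=1$ uses totally split primes to force $[K:\Q]=[K':\Q]$, hence $\#H=\#H_{1}$, without which the identity $\pi_{H}(x)=\frac{\#G}{\#x^{G}\,\#H}\,\#(x^{G}\cap H)$ would not convert $\pi_{H}=\pi_{H_{1}}$ into quasi-conjugacy.
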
 

Perlis and Stuart asked if not only the residue degrees or the number of prime factors are determined by the zeta function, but if the ramification degrees are determined as well. This is not the case, as shown by Perlis. However, Perlis and Stuart pointed out that it was not known if the sum of the ramification degrees can differ. Suppose that $K,K'$ are arithmetically equivalent  number fields and let $\ell$ be a rational prime. Let  $A_{K}(\ell)=(f_{1},...,f_{g})=A_{K'}(\ell)$ be the common arithmetic type tuples for $\ell$. Let $(e^{K}_{1},...,e^{K}_{g})$ and $(e^{K'}_{1},...,e^{K'}_{g})$ be the corresponding tuples of ramification degrees. Based on their work on split and arithmetic equivalence, and the examples they studied, Perlis and Stuart ended their paper with the following question:

\begin{question}[Perlis, Stuart \cite{PerStu}]\label{LaPregunta}

Does it  follow that the sum of the ramification degrees is the same for all prime $\ell$?

\[e^{K}_{1}+...+e^{K}_{g}=e^{K'}_{1}+...+e^{K'}_{g}\]

\end{question}

In principle the only obvious restriction on the ramification degrees is given by the following:

\[f_{1}e^{K}_{1}+...+f_{g}e^{K}_{g}=f_{1}e^{K'}_{1}+...+f_{g}e^{K'}_{g}=[K:\Q].\]

\subsubsection{Septic fields}

Since degree less than $7$ number fields are arithmetically solitary the first interesting case of study for Question \ref{LaPregunta} is that of septic number fields. As it turns out, in degree $7$,  under certain restrictions on the ramification types Question \ref{LaPregunta} is positively answered:

\begin{teo}[cf. Theorem \ref{Cases}] Let $K$ be a degree $7$ number field, and let $\ell$ be a rational prime. Suppose that the arithmetic type of $\ell$ in $K$ does not belong to \[\{(1,3), (1,1,2), (1,1,1,2)\}.\] Then for any $K'$ arithmetically equivalent to $K$  \[e^{K}_{1}+...+e^{K}_{g}=e^{K'}_{1}+...+e^{K'}_{g}.\]
\end{teo}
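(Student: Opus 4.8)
The plan is to reduce the question to a finite, purely group-theoretic computation governed by Perlis's characterization (Theorem \ref{EldePerlis}) together with the structural fact, recalled in the introduction, that a non-solitary septic field must have Galois closure with group $G={\rm PSL}_2(\F_7)$ of order $168$. First I would set up the local picture: fix a rational prime $\ell$ and a prime of $N$ above it, and let $D$ and $I$ be the corresponding decomposition and inertia subgroups of $G$, with $I \trianglelefteq D$ cyclic (tame case) or more generally an extension encoding the wild part. The arithmetic type $A_K(\ell)$ is then read off from the orbits of $\langle D\rangle$ acting on the coset space $G/H$, the residue degrees $f_i$ being the sizes of the $I$-orbit-blocks within each $D$-orbit and the ramification degrees $e_i$ being the number of $I$-orbits fused by $D$ in that orbit; explicitly, for each $D$-orbit $O$ the contribution is $e_i = |I\text{-orbits in }O|$-type data and $\sum e_i$ becomes a sum over double cosets $I\backslash G/H$ weighted appropriately. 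The key translation is that the quantity $\sum_i e_i$ equals the number of $I$-orbits on $G/H$, i.e. the number of double cosets $\#(I \backslash G / H)$, equivalently $\frac{1}{|I|}\sum_{\sigma \in I} |{\rm Fix}_{G/H}(\sigma)|$ by Burnside.

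Granting this dictionary, the problem becomes: for which inertia subgroups $I$ of $G$ and which pairs $H,H_1$ of quasi-conjugate index-$7$ subgroups does the number of $I$-orbits on $G/H$ depend on the choice between $H$ and $H_1$? By Burnside's lemma the number of $I$-orbits is $\frac{1}{|I|}\sum_{\sigma\in I}\chi(\sigma)$ where $\chi$ is the permutation character of $G$ on $G/H$. Since $H$ and $H_1$ are quasi-conjugate, their permutation characters are \emph{equal} as characters of $G$, so $\sum_{\sigma\in I}\chi(\sigma)$ is the same for $H$ and $H_1$ for \emph{any} subgroup $I$. Therefore, whenever the full inertia group $I$ is the object acting, the two sums of ramification degrees automatically agree. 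This observation shows the answer is ``yes'' on the nose in the tame unramified-type-determined cases; the subtlety that forces the exceptional list must come from the fact that $\sum_i e_i$ is \emph{not} literally the $I$-orbit count but a refined invariant that also remembers how residue degrees attach to each orbit.

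The real work, then, is to identify precisely when the decomposition of $\sum_i e_i$ into $(f_i,e_i)$-pairs fails to be a character-theoretic invariant. I would organize this by the possible arithmetic type tuples $A_K(\ell)$ of a septic field: these are the partitions of $7$ realizable as cycle-type-like data for a transitive action, and for each I would compute, using the explicit subgroup lattice and conjugacy-class data of ${\rm PSL}_2(\F_7)$ acting on its two inequivalent transitive degree-$7$ representations (the two non-conjugate index-$7$ subgroups, isomorphic to $S_4$, which are exactly the quasi-conjugate pair realizing non-solitary septic fields), the multiset $\{(f_i,e_i)\}$ as a function of the inertia datum $I$. Running through the finitely many subgroups $I$ up to conjugacy and recording $\sum_i e_i$ for each of the two degree-$7$ actions, I expect to find that $\sum e_i$ coincides for all types except exactly those in the list $\{(1,3),(1,1,2),(1,1,1,2)\}$, where the two $S_4$-actions split a given inertia orbit differently between a ramified and an unramified factor.

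The main obstacle I anticipate is the bookkeeping of the refined pairing $(f_i,e_i)$ rather than the coarse orbit count, because that is the only place where quasi-conjugacy stops protecting us: equal permutation characters guarantee equal total orbit counts and equal fixed-point sums, but they do \emph{not} guarantee that $I$-orbits distribute identically across $D$-orbits when a single $D$-orbit carries several distinct residue degrees. Concretely, the hard step is to verify that for the three exceptional types there genuinely exist primes (equivalently, admissible $D\supseteq I$ configurations inside $G={\rm PSL}_2(\F_7)$) for which $\sum_i e^K_i \neq \sum_i e^{K'}_i$, or else to prove the sums still agree; pinning down which of these exceptional types produces an actual inequality, as opposed to merely escaping the general argument, is the delicate point and will require either the explicit ramification data of the known ${\rm PSL}_2(\F_7)$ septic pairs or a careful case analysis of which local extensions of $\Q_\ell$ can arise with the relevant inertia groups.
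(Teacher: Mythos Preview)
Your central translation is wrong, and this is what makes the whole Burnside argument collapse. Within a single $D$-orbit on $G/H$, the $I$-orbits all have the same size because $I\trianglelefteq D$; that common size is $e_i=[I:I\cap g^{-1}Hg]$, and hence the $D$-orbit of size $e_if_i$ breaks into exactly $f_i$ many $I$-orbits, not $e_i$ many. Summing over primes, the number of $I$-orbits on $G/H$ is $\sum_i f_i$, not $\sum_i e_i$. So your Burnside computation, once corrected, proves only $\sum_i f_i^K=\sum_i f_i^{K'}$, which is already immediate from $A_K(\ell)=A_{K'}(\ell)$. You noticed the argument was proving too much and backed off, but the diagnosis (``$\sum e_i$ is a refined invariant that also remembers how residue degrees attach'') is not the issue; the issue is that $\sum e_i$ is simply not an orbit count for any natural subgroup, so there is no character-theoretic shortcut available.

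The paper's proof is completely different and far more elementary. It never sets up the $D,I$ formalism at all. Instead it lists every tuple $(f_1,\dots,f_g;e_1,\dots,e_g)$ with $\sum f_ie_i=7$, groups them by arithmetic type $(f_1,\dots,f_g)$, and checks by inspection whether the arithmetic type pins down $\sum e_i$. For $g\in\{1,5,6,7\}$ this is immediate; for $g\in\{2,3,4\}$ the enumeration leaves, besides the three types in the statement, only the types $(1,2,2)$ and $(1,2)$ as potentially ambiguous. These two are then eliminated by short ad hoc arguments (Propositions~\ref{Esporadicos} and~\ref{Difi}) that use the subgroup lattice of ${\rm PSL}_2(\F_7)$ to show the offending factorization types $\{(1,2,2),(3,1,1)\}$, $\{(1,2),(3,2)\}$, $\{(1,2),(5,1)\}$ cannot occur in a ${\rm PSL}_2(\F_7)$ septic field. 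Your fallback plan of enumerating inertia subgroups of ${\rm PSL}_2(\F_7)$ and computing both degree-$7$ actions would eventually reach the same conclusion, but you have not carried it out, and the paper's partition-by-partition bookkeeping is both shorter and requires the group theory only at two isolated spots.
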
 

From this and from the fact that septic fields that are non arithmetically solitary have a Galois closure with simple Galois group, we obtain:

\begin{coro}[cf. Corollary \ref{ElCoro}]
Let $K, K'$ be degree $7$ arithmetically equivalent number fields. Suppose $\ell$ is a rational prime which is not wildly ramified in either $K$ or $K'$, and let $v_{\ell}$ the usual $\ell$-adic valuation. If $\displaystyle e^{K}_{1}+...+e^{K}_{g} \neq e^{K'}_{1}+...+e^{K'}_{g}$ then $v_{\ell}({\rm disc}(O_{K})) \in \{2, 4\}$.%
\end{coro}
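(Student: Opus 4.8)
The plan is to reduce to the simple-group situation and then read the discriminant exponent off the inertia group. First I would observe that if the two ramification sums differ then $K$ and $K'$ cannot be isomorphic, so $K$ is a non-solitary septic field; by the result quoted in the introduction its Galois closure $N$ has $G = {\rm Gal}(N/\Q) \cong {\rm PSL}_2(\F_7)$, the simple group of order $168$, and $K,K'$ correspond to the two conjugacy classes of index-$7$ subgroups $H,H_1\cong S_4$. Fixing a prime of $N$ above $\ell$ with decomposition group $D$ and inertia group $I$, the primes of $K$ (resp.\ $K'$) above $\ell$ are the $D$-orbits on $G/H$ (resp.\ $G/H_1$), the ramification index of such a prime being the common size of the $I$-orbits inside the corresponding $D$-orbit and the residue degree the number of those $I$-orbits. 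Since $H$ and $H_1$ are quasi-conjugate, ${\rm Ind}_H^G 1 = {\rm Ind}_{H_1}^G 1 = 1+\chi_6$, so every element of $G$ has the same cycle type in both degree-$7$ actions; in particular $\ell$ has the same number of inertia orbits in the two fields, consistent with the fact that arithmetically equivalent fields have equal discriminants.

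Next I would turn the hypothesis into a clean formula. As $\ell$ is not wildly ramified, every prime above it is tame, so the different exponent there is $e-1$ and the conductor–discriminant computation collapses to
\[ v_\ell({\rm disc}(O_K)) = \sum_i f_i(e_i-1) = [K:\Q] - \sum_i f_i = 7 - N, \]
where $N=\sum_i f_i$ is the total number of $I$-orbits on the seven points, i.e.\ the number of cycles of a generator $g$ of the (cyclic, because tame) inertia group. Thus the whole problem is to determine which values of $N$ are compatible with a genuine discrepancy between the two sums.

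The heart of the argument is then a short inspection of the cyclic inertia groups of $G$. The element orders in ${\rm PSL}_2(\F_7)$ are $1,2,3,4,7$, and evaluating $1+\chi_6$ on the powers of a generator forces the cycle types on the seven points to be, respectively, $1^7,\ 1^3 2^2,\ 1\cdot 3^2,\ 1\cdot 2\cdot 4,\ 7$, so that $N\in\{7,5,3,3,1\}$. I would then dispose of the two extreme cases. If $I$ is trivial, $\ell$ is unramified, all $e_i=1$, and $\sum_i e_i$ is just the common number of Frobenius cycles in each field; and if $g$ has order $7$, then $I$ acts transitively, $\ell$ is totally ramified with a single prime and $e=7$ in both fields. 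In either case the two sums coincide, contradicting the hypothesis. Hence $g$ must have order $2,3$ or $4$, giving $N\in\{5,3\}$ and therefore $v_\ell({\rm disc}(O_K)) = 7-N \in \{2,4\}$.

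The step I expect to require the most care is the exclusion of the unramified and totally ramified cases: one must check that in each the factorization data are literally identical in $K$ and $K'$, which rests on the coincidence of cycle types coming from quasi-conjugacy together with the transitivity of an order-$7$ inertia group. The cleanest way to organize this, and to see concretely how the remaining orders $2,3,4$ interact with Theorem \ref{Cases}, is to compute the $D$-orbits on the points and on the lines of the Fano plane directly, using that $G$ preserves the incidence structure (lines map to lines and pencils to pencils). This makes transparent both that total ramification forces $\sum_i e_i = 7$ on each side and that the admissible discrepancies are confined to the inertia orders producing $N\in\{5,3\}$, that is, to $v_\ell({\rm disc}(O_K))\in\{2,4\}$.
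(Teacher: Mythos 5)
Your argument is correct, but it reaches the conclusion by a genuinely different route from the paper's. The paper gets the corollary in three lines from Theorem \ref{Cases}: a discrepancy in the ramification sums forces the arithmetic type of $\ell$ into $\{(1,3),(1,1,2),(1,1,1,2)\}$, the tame formula $v_{\ell}({\rm disc}(O_{K}))=7-(f_{1}+\cdots+f_{g})$ then confines the valuation to a short list, and the odd values are eliminated because a non-solitary septic field has Galois group ${\rm PSL}_2(\F_{7})\subseteq A_{7}$ and hence square discriminant. You bypass both Theorem \ref{Cases} and the square-discriminant observation: you classify the possible cyclic tame inertia subgroups of ${\rm PSL}_2(\F_{7})$ by element order ($1,2,3,4,7$), read the cycle types on the seven points off the permutation character $1+\chi_{6}$ (fixed-point counts $7,3,1,1,0$, hence $N\in\{7,5,3,3,1\}$ cycles), and discard the trivial and order-$7$ cases directly, since there the factorization data coincide in the two fields. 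All the group-theoretic and ramification-theoretic facts you invoke check out, so the proof is complete as it stands; the closing paragraph about Fano-plane orbits is dispensable. What your approach buys is self-containedness (only the character table and the tame different are needed) and a sharper localization of where discrepancies can live, namely inertia of order $2$, $3$ or $4$; what the paper's buys is brevity, given that Theorem \ref{Cases} has already been proved. It is worth noting that combining the two arguments yields slightly more than either states: an inertia generator of order $3$ or $4$ produces arithmetic type $(1,1,1)$ or $(1,2)$, neither of which lies in the exceptional list of Theorem \ref{Cases}, so a tame discrepancy in fact forces order-$2$ inertia, $N=5$, and $v_{\ell}({\rm disc}(O_{K}))=2$.
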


\subsection{What about general non arithmetically solitary septic fields?}

Based on the above results we use an algorithm that searches for possible examples giving a negative answer to Question \ref{LaPregunta}. First we search for pairs of septic fields with same  discriminant, and signature, with not too many ramified primes; actually to make things easier we start with only two prime factors. Moreover, using the remark after Corollary \ref{ElCoro}, we  take the prime $\ell=2$ as one of the two primes. Similarly, thanks to Corollary \ref{ElCoro}, we know the valuation of the discriminant at the other prime divisor as well. Among the candidates found, we select the ones such that their ramified primes have arithmetic type belonging to the list appearing in Theorem \ref{Cases}. From those, we take the ${\rm PSL}_{2}(\F_{7})$ fields and see if there is any couple of arithmetically equivalent fields for which the sum of ramification degrees differ. More explicitly:

\subsubsection{Algorithm}

The input is a list of septic fields up to some discriminant bound, and the output is either a list either empty or containing pairs of examples, within the discriminant bound, giving a negative answer to Question \ref{LaPregunta}.

\begin{itemize}
    \item[(i)] Look in the list for number fields with discriminant of the form $2^{2a}p^{2b}$ where $a \in \{3,4\}$ and $b \in \{1,2\}$.\\
    
    \item[(ii)] Select pairs of fields from step (i) that have equal signature and ${\rm discriminant}$ and such that their ramification types, at $2$ or $p$, belong to  the list appearing in Theorem \ref{Cases}. \\
    
    \item[(iii)] From (ii) select the ones that have Galois group ${\rm PSL}_{2}(\F_{7})$.\\
    
    \item[(iv)] Verify, using Theorem \ref{DisjointPrime}, whether or not the fields obtained in (iii) are arithmetically equivalent.\\
    
    \item[(v)] From each pair of arithmetically equivalent fields obtained check whether or not there are pairs for which the sum of ramification degrees, at the ramified primes, are different.\\
    
\end{itemize}

Using the algorithm described above with John Jones' data base of number fields, and writing some MAGMA code, we found out that Theorem \ref{Cases} is optimal for getting a positive answer to Question \ref{LaPregunta}; in other words:

\begin{teo}[cf. Theorem \ref{Larespuesta}]
For each tuple $\mathcal{F} \in \{(1,3), (1,1,2), (1,1,1,2)\}$ there are examples of pairs $(K,K')$ of non-isomorphic arithmetically equivalent number fields, and a prime $\ell$, with common arithmetic type $\mathcal{F}$ in $K$ and $K'$ such that \[e^{K}_{1}+...+e^{K}_{g} \neq e^{K'}_{1}+...+e^{K'}_{g}.\] 
\end{teo}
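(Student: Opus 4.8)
The plan is to argue by explicit construction: for each of the three exceptional types $\mathcal{F}$ it suffices to produce a single non-isomorphic pair $(K,K')$ and a prime $\ell$ witnessing the strict inequality. Since $K$ and $K'$ are non-isomorphic, septic, and arithmetically equivalent, Theorem \ref{EldePerlis} together with the classification recalled in the introduction forces the Galois group of their common Galois closure $N$ to be ${\rm PSL}_{2}(\F_{7}) \cong {\rm GL}_{3}(\F_{2})$, with $H$ and $H_{1}$ the two non-conjugate but quasi-conjugate index-$7$ subgroups (each isomorphic to $S_{4}$), namely the point- and line-stabilizers for the action on the Fano plane. Thus the entire search space is the family of ${\rm PSL}_{2}(\F_{7})$-septic fields, paired through these two conjugacy classes, and the common arithmetic type at every prime is automatic: the Euler factor of $\zeta_{K}$ at $\ell$ depends only on the residue degrees, so arithmetic equivalence already forces $A_{K}(\ell)=A_{K'}(\ell)$ for all $\ell$, and the entire content lies in the ramification indices, which are invisible to the zeta function.

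Before searching I would check that the three types are not ruled out by the only unconditional constraint $\sum_{i} f_{i} e_{i}=7$. For $\mathcal{F}=(1,3)$ the equation $e_{1}+3e_{2}=7$ has the two solutions $(e_{1},e_{2})=(4,1)$ and $(1,2)$, with ramification sums $5$ and $3$; identical bookkeeping for $(1,1,2)$ and $(1,1,1,2)$ again leaves room for two distinct sums. Group-theoretically these patterns are read off from a decomposition group $D$ with inertia subgroup $I \trianglelefteq D$ acting on the coset spaces $G/H$ and $G/H_{1}$: the residue degrees $f_{i}$ count the $I$-orbits inside each $D$-orbit and must agree for $H$ and $H_{1}$, whereas the $e_{i}$ are the common lengths of those $I$-orbits and may genuinely differ between the two coset spaces. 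This both confirms that the examples are not excluded a priori and tells me exactly which inertia behaviour to hunt for.

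I would then execute the algorithm described above against John Jones's database. Restricting to septic fields of discriminant $2^{2a}p^{2b}$ with $a\in\{3,4\}$ and $b\in\{1,2\}$ as dictated by Corollary \ref{ElCoro} and the choice $\ell=2$, I would pair off fields of equal signature and discriminant whose type at $2$ or $p$ lies in $\{(1,3),(1,1,2),(1,1,1,2)\}$, discard those whose Galois group is not ${\rm PSL}_{2}(\F_{7})$, and test each surviving pair for arithmetic equivalence by means of Theorem \ref{DisjointPrime}. For each genuinely equivalent pair, factoring $\ell$ in $O_{K}$ and $O_{K'}$ in MAGMA returns the tuples $(e^{K}_{i})$ and $(e^{K'}_{i})$; recording those with unequal ramification sums and exhibiting one verified witness for each of the three types finishes the proof. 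I would present the three examples by their defining polynomials, discriminants, the common type at $\ell$, and the two differing ramification tuples.

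The decisive difficulty is not the group theory but the search-and-verification stage. Nothing guarantees in advance that the fields tabulated up to a feasible discriminant bound contain a witness for every one of the three types, so the bound may have to be raised until examples surface; and certifying genuine arithmetic equivalence --- a priori a condition on infinitely many primes --- is the delicate step, which is precisely where Theorem \ref{DisjointPrime} reduces the problem to a finite, checkable criterion. Confirming that each Galois group is the full ${\rm PSL}_{2}(\F_{7})$ rather than a proper transitive subgroup, and that the two fields are genuinely non-isomorphic, are the remaining points that require care.
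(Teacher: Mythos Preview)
Your approach is exactly the paper's: search Jones's database under the discriminant constraints of Corollary \ref{ElCoro}, filter for ${\rm PSL}_{2}(\F_{7})$ septics whose ramified primes have the target arithmetic types, certify arithmetic equivalence via Theorem \ref{DisjointPrime} (checking $[KK':\Q]\le 28$ in MAGMA), and read off the ramification tuples. The paper carries this out and finds that two pairs already cover all three types: $K_{1},K'_{1}$ of discriminant $2^{6}\cdot 691^{2}$ witness $\mathcal{F}=(1,3)$ at $\ell=2$ and $\mathcal{F}=(1,1,1,2)$ at $\ell=691$, while $K_{2},K'_{2}$ of discriminant $2^{8}\cdot 7^{8}$ witness $\mathcal{F}=(1,1,2)$ at $\ell=2$; explicit defining polynomials and the computed factorization types are recorded.

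The only gap in your write-up is the one you yourself flag: for an existence theorem proved by construction, the proof \emph{is} the list of witnesses. Your proposal lays out the search and the verification criteria correctly but stops short of exhibiting the polynomials and the factorizations, so as it stands it is a correct plan rather than a completed proof. Once the search is actually run --- and the paper shows it succeeds at very modest discriminant --- nothing further is needed; in particular the non-isomorphism of each pair is automatic once the ramification tuples differ, since isomorphic fields have identical factorization data at every prime.
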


\subsubsection{Overview of the contents}

In Section \S \ref{LasGalRep} we recall most of the standard facts of Arithmetic equivalence from the point of Galois representations. Nothing in this section is new and it is well known to experts but we could not find a suitable reference that contains these results in the context of $G_{\Q}$ representations. For example, even though that from the point of view of Galois representations Theorem \ref{main} is elementary we have not found a presentation of that result in such a natural form. In Section \S \ref{LasPruebas} we give proofs of our main results and exhibit examples, found following the algorithm described above, that give a negative answer to Question \ref{LaPregunta}.

\section{Arithmetic equivalence via Galois representations}\label{LasGalRep}

Suppose that the Dedekind zeta function of a number field $K$ is written as \[\zeta_{K}(s):=\sum_{n=0}^{\infty} \frac{a_{n}(K)}{n^{s}}.\] Then interpreting the zeta function as a counting function it should be true, as in the case of Tate's isogeny theorem, that $\zeta_{K}(s)$ is completely determined by the values $a_{\ell}(K)$ at primes $\ell$. Since $a_{\ell}(K)$ is equal to the number $1$'s appearing in the tuple $A_{K}(\ell)$ knowing the values $a_{\ell}(K)$ is a priori weaker than knowing the the arithmetic type of $\ell$ in $K$. However, as suggested above, the knowledge of the $a_{\ell}(K)$ for almost all $\ell$ indeed determines the function $\zeta_{K}(s)$. In this section, using the rudiments of Galois representations,  we briefly recall how these results can be obtained.\\

\noindent Let $K$ be a degree $n$ number field, and let us denote by $\widetilde{K}$ its Galois closure over $\Q$. We start by recalling the construction of an $n$-dimensional complex Galois representation $\rho_{K}$ of the absolute Galois group $G_{\Q}$ such that the Artin $L$-function associated to $\rho_{K}$ is $\zeta_{K}(s)$. Let ${\rm Emb}(K)$ be the set of its complex embeddings of $K$. The absolute Galois group $G_{\Q}$ acts continuously on ${\rm Emb}(K)$ via composition. The continuity follows since the kernel of the action is the open group $G_{\widetilde{K}}$. Since   $n=\#{\rm Emb}(K)$ the above gives a continuous permutation representation   $G_{\Q}: \pi_{K} \to S_{n}$, which by composition with the permutation representation $\iota_{n}: S_{n} \to \rm{GL}_{n}(\C)$ produces an $n$-dimensional complex representation \[\rho_{K} : G_{\Q} \to \rm{GL}_{n}(\C).\]

\begin{definition}\label{TateModuleNumberField}
Let $K$ be  a number field. The continuous $\C[G_{\Q}]$-module $T_{K}$ is the $G_{\Q}$-module attached to the representation $\rho_{K}$. In other words, $\displaystyle T_{K}:=\bigoplus_{ \sigma \in {\rm Emb} } \C\sigma$ with the action of $G_{\Q}$ in each element of the basis given by composition.
\end{definition}

The relevance of this representation to our purposes is that the Artin formalism gives us the following:

\begin{proposition}\label{ZetaLArtin}
Let $K$ be a number field and let us denote by $L(\rho, s)$ the Artin L-function attached to a representation $\rho$. Then $\displaystyle L(\rho_{K}, s)= \zeta_{K}(s).$
\end{proposition}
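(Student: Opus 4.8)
The plan is to show that the Euler products of $L(\rho_K, s)$ and $\zeta_K(s)$ agree local factor by local factor, which reduces the statement to a comparison between the Frobenius action on the permutation module $T_K$ and the splitting behaviour of primes in $K$. The starting observation is that, as a $G_\Q$-set, ${\rm Emb}(K)$ is isomorphic to the coset space $G_\Q/G_K$, where $G_K := {\rm Gal}(\overline{\Q}/K)$ is the stabilizer of a fixed embedding; equivalently $\rho_K = {\rm Ind}_{G_K}^{G_\Q} \mathbf{1}$. From here one could simply invoke the inductivity of Artin $L$-functions together with the identity $L(\mathbf{1}_{G_K}, s) = \zeta_K(s)$ and conclude at once. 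I prefer, however, to verify the equality of local factors by hand, since this computation is precisely what links the representation $\rho_K$ to the arithmetic types $A_K(\ell)$ used throughout the paper.

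First I would fix a rational prime $\ell$ and a decomposition group $D_\ell \subset G_\Q$ at a place above $\ell$, with inertia subgroup $I_\ell \trianglelefteq D_\ell$ and Frobenius generator ${\rm Frob}_\ell$ of $D_\ell/I_\ell$. The primes $\mathfrak{p}_1, \dots, \mathfrak{p}_{g}$ of $K$ above $\ell$ are in bijection with the $D_\ell$-orbits of ${\rm Emb}(K) \cong G_\Q/G_K$, and for the orbit $O_i$ attached to $\mathfrak{p}_i$ one has $\# O_i = e_i f_i$, the local degree, while $f_i$ equals both the number of $I_\ell$-orbits inside $O_i$ and the order of the induced ${\rm Frob}_\ell$-action that permutes them cyclically.

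Next I would compute the local factor of $L(\rho_K, s)$, namely $\det\!\bigl(1 - \rho_K({\rm Frob}_\ell)\,\ell^{-s} \mid T_K^{I_\ell}\bigr)^{-1}$. The inertia invariants $T_K^{I_\ell}$ decompose along the orbits $O_i$; on the summand coming from $O_i$ they are spanned by the $I_\ell$-orbit sums, an $f_i$-dimensional space on which ${\rm Frob}_\ell$ acts as a single $f_i$-cycle. A short linear-algebra computation (the characteristic polynomial of an $f_i$-cycle permutation matrix is $t^{f_i} - 1$, hence $\det(1 - t P) = 1 - t^{f_i}$) then yields the contribution $\bigl(1 - \ell^{-f_i s}\bigr)^{-1}$, so the full local factor is $\prod_{i=1}^{g}\bigl(1 - \ell^{-f_i s}\bigr)^{-1}$. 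This matches exactly the $\ell$-Euler factor of $\zeta_K(s) = \prod_{\mathfrak{p}} \bigl(1 - N\mathfrak{p}^{-s}\bigr)^{-1}$, since $N\mathfrak{p}_i = \ell^{f_i}$; taking the product over all $\ell$ finishes the proof.

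The main obstacle is the ramified case. At unramified $\ell$ the inertia $I_\ell$ acts trivially, the Frobenius cycle type is literally the arithmetic type, and the identity is transparent. At ramified $\ell$ the delicate point is to verify correctly that passing to $I_\ell$-invariants collapses each orbit $O_i$ of size $e_i f_i$ down to an $f_i$-dimensional ${\rm Frob}_\ell$-cyclic space, so that the ramification index $e_i$ disappears and only the residue degree $f_i$ survives in the Euler factor. Getting this bookkeeping of $D_\ell$-orbits versus $I_\ell$-orbits right, and confirming that ${\rm Frob}_\ell$ permutes the $f_i$ inertia-orbits in a single cycle, is where the argument requires genuine care.
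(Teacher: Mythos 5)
Your argument is correct, but it takes a genuinely different route from the paper. The paper's proof is exactly the one-line alternative you mention and then set aside: it restricts $\rho_K$ to ${\rm Gal}(\widetilde{K}/\Q)$, identifies the permutation action on ${\rm Emb}(K)$ with the action on the cosets ${\rm Gal}(\widetilde{K}/\Q)/{\rm Gal}(\widetilde{K}/K)$, so that $\rho_K \cong {\rm Ind}\,\mathbf{1}$, and then invokes the inductivity of Artin $L$-functions to get $L(\rho_K,s)=L(\mathbf{1}_{{\rm Gal}(\widetilde{K}/K)},s)=\zeta_K(s)$. You instead verify the equality Euler factor by Euler factor, matching the $D_\ell$-orbits of ${\rm Emb}(K)$ with the primes $\mathfrak{p}_i$ above $\ell$ and checking that passing to $I_\ell$-invariants collapses each orbit of size $e_if_i$ to an $f_i$-dimensional space on which Frobenius acts as a single cycle; the key facts you use (all $I_\ell$-orbits in $O_i$ have equal size $e_i$ because $I_\ell\trianglelefteq D_\ell$, and $D_\ell/I_\ell$ permutes them transitively and hence cyclically) are standard and correctly invoked, so the bookkeeping you flag as delicate does go through. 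What the paper's route buys is brevity, at the cost of quoting Artin formalism as a black box; what your route buys is a self-contained proof that, at unramified primes, also delivers for free the identity $\det(X-\rho_K({\rm Frob}_\ell))=\prod_i(X^{f_i}-1)$, which the paper has to establish separately in the lemma immediately following this proposition, and it even extends that computation to ramified primes. Either version is acceptable; yours is longer but arguably more informative for the way $\rho_K$ is used in the rest of the paper.
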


\begin{proof}

By Galois correspondence $\rho_{K}$ factorizes through  ${\rm Res}_{\widetilde{K}} ^{\overline{\Q}}(\rho_{K}):  {\rm Gal}(\widetilde{K}/\Q) \to \rm{GL}_{n}(\C)$. Again, by basic Galois theory, the action of ${\rm Gal}(\widetilde{K}/\Q)$ in ${\rm Emb}(K)$ is isomorphic to the permutation representation of  ${\rm Gal}(\widetilde{K}/\Q)$ in the set of cosets  ${\rm Gal}(\widetilde{K}/\Q)/ {\rm Gal}(\widetilde{K}/K)$. Hence, 
$\displaystyle {\rm Res}_{\widetilde{K}} ^{\overline{\Q}}(\rho_{K}) \cong  {\rm Ind}_{{\rm Gal}(\widetilde{K}/K)}^{{\rm Gal}(\widetilde{K}/\Q)} 1_{{\rm Gal}(\widetilde{K}/K)}.$ Thanks to Artin's formalism 
\[ L(\rho_{K}, s)=L\left({\rm Res}_{\widetilde{K}} ^{\overline{\Q}}(\rho_{K}), s\right) =L\left( {\rm Ind}_{{\rm Gal}(\widetilde{K}/K)}^{{\rm Gal}(\widetilde{K}/\Q)} 1_{{\rm Gal}(\widetilde{K}/K)} , s\right)=L(1_{{\rm Gal}(\widetilde{K}/K)} ,s)= \zeta_{K}(s).\] \end{proof}

Since the Dedekind zeta function is an Artin $L$-function then its prime terms correspond to traces of Frobenius elements:

\begin{corollary}\label{Frobenius}

Let $K$ be a number field and $\ell$ be a prime unramified\footnote{This is the same as being unramified in $K$ since the conductor of $\rho_{K}$ is the discriminant of $K$.} under $\rho_{K}$. Let $Frob_{\ell}$ be the conjugacy class of the element Frobenius at $\ell$. Then,  \[{\rm Trace}(\rho_{K}(Frob_{\ell})) =a_{\ell}(K).\]
\end{corollary}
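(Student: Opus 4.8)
The plan is to extract and match the coefficient of $\ell^{-s}$ on both sides of the identity $\zeta_{K}(s) = L(\rho_{K}, s)$ supplied by Proposition \ref{ZetaLArtin}. Writing both functions as Dirichlet series, $\zeta_{K}(s) = \sum_{n} a_{n}(K) n^{-s}$ and $L(\rho_{K},s) = \sum_{n} b_{n} n^{-s}$, the equality of the two series forces $a_{n}(K) = b_{n}$ for all $n$, and in particular at $n = \ell$. The entire content of the corollary is then the computation of $b_{\ell}$ and its comparison with $a_{\ell}(K)$.

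First I would expand the Euler product of the Artin $L$-function at $\ell$. Since $\ell$ is unramified under $\rho_{K}$ (equivalently, by the footnote, unramified in $K$, as the conductor of $\rho_{K}$ equals $\mathrm{disc}(K)$), the inertia subgroup $I_{\ell}$ acts trivially, so the local factor at $\ell$ is the full determinant $\det(1 - \ell^{-s}\rho_{K}(\mathrm{Frob}_{\ell}))^{-1}$, with $\mathrm{Frob}_{\ell}$ well defined as a conjugacy class; since the trace is a class function, $\mathrm{Trace}(\rho_{K}(\mathrm{Frob}_{\ell}))$ is unambiguous. Writing $\alpha_{1},\dots,\alpha_{n}$ for the eigenvalues of $\rho_{K}(\mathrm{Frob}_{\ell})$ and expanding $\prod_{j}(1 - \alpha_{j}\ell^{-s})^{-1}$ as a power series in $\ell^{-s}$, the coefficient of the linear term $\ell^{-s}$ is $\sum_{j}\alpha_{j} = \mathrm{Trace}(\rho_{K}(\mathrm{Frob}_{\ell}))$. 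Because $\ell$ is prime, no product of Euler factors at primes $p \neq \ell$ can contribute to the $n=\ell$ term, so the coefficient $b_{\ell}$ of the whole product is exactly this trace.

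Next I would identify the coefficient on the zeta side. The coefficient $a_{\ell}(K)$ counts integral ideals of $O_{K}$ of norm $\ell$; since $\ell$ is prime, such an ideal must itself be a prime $\mathfrak{p} \mid \ell$ with $N(\mathfrak{p}) = \ell^{f} = \ell$, i.e. with residue degree $f = 1$. Hence $a_{\ell}(K)$ equals the number of degree-one primes above $\ell$, which is precisely the number of entries equal to $1$ in the arithmetic type $A_{K}(\ell)$, in agreement with the identification already recorded in the text. Equating $a_{\ell}(K) = b_{\ell}$ then gives the claim.

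The only delicate point, rather than a genuine obstacle, is the local bookkeeping: one must invoke the equality of the conductor of $\rho_{K}$ with $\mathrm{disc}(K)$ to guarantee that ``$\ell$ unramified under $\rho_{K}$'' forces trivial inertia, so that the local factor is the full determinant with no $V^{I_{\ell}}$-truncation and the linear coefficient is genuinely the trace. An alternative and even more self-contained route bypasses $L$-functions altogether: because $\rho_{K} = \iota_{n}\circ \pi_{K}$ is a permutation representation, $\mathrm{Trace}(\rho_{K}(g))$ counts the fixed points of $\pi_{K}(g)$ acting on $\mathrm{Emb}(K)$, and for unramified $\ell$ the orbit lengths of $\pi_{K}(\mathrm{Frob}_{\ell})$ are exactly the residue degrees $f_{i}$; thus the number of fixed points equals the number of length-one orbits, i.e. the number of $f_{i}=1$, which is $a_{\ell}(K)$.
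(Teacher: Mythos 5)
Your argument is correct and is precisely the fleshing-out of what the paper does: it offers no formal proof, only the remark that the prime coefficients of an Artin $L$-function are traces of Frobenius, which is exactly your comparison of the $\ell^{-s}$ coefficients of $\zeta_{K}(s)$ and of the Euler product of $L(\rho_{K},s)$ via Proposition \ref{ZetaLArtin}. Your closing alternative (counting fixed points of the permutation action on ${\rm Emb}(K)$, whose orbit lengths are the residue degrees) is a valid and more self-contained route, but the main line of your proposal coincides with the paper's intended justification.
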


Proposition \ref{ZetaLArtin} gives not only a simple way to express the trace of Frobenius but it also gives a useful generalization of the above corollary to calculate its characteristic polynomial $\displaystyle \det(X-\rho_{K}({\rm Frob}_{\ell})).$

\begin{lemma}
Let $K$ be a number field and $\ell$ be a prime, unramified in $K$, and let $(f_{1},...,f_{g})$ be the arithmetic type of $\ell$ in $K$. Then, 
\[{\rm det}(X-\rho_{K}({\rm Frob}_{\ell}))=\prod_{i=1}^{g}(X^{f_{i}}-1).\]

\end{lemma}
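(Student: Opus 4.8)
The plan is to compute the characteristic polynomial of $\rho_K(\mathrm{Frob}_\ell)$ by exploiting the fact, established in the proof of Proposition \ref{ZetaLArtin}, that $\rho_K$ restricted to $\mathrm{Gal}(\widetilde{K}/\Q)$ is the permutation representation on the cosets $\mathrm{Gal}(\widetilde{K}/\Q)/\mathrm{Gal}(\widetilde{K}/K)$, equivalently the action of $G_\Q$ on $\mathrm{Emb}(K)$. Since $\rho_K$ is a permutation representation, the matrix $\rho_K(\mathrm{Frob}_\ell)$ is a permutation matrix, namely the matrix of the permutation that $\mathrm{Frob}_\ell$ induces on the $n$ embeddings of $K$. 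The characteristic polynomial of a permutation matrix factors according to the cycle structure of the underlying permutation: a cycle of length $f$ contributes the factor $X^f - 1$ to $\det(X - P)$, since the permutation matrix of an $f$-cycle has characteristic polynomial $X^f-1$ (its eigenvalues are the $f$-th roots of unity).

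The heart of the argument is therefore to identify the cycle type of the permutation induced by $\mathrm{Frob}_\ell$ on $\mathrm{Emb}(K)$ with the arithmetic type $(f_1,\dots,f_g)$. This is the standard dictionary between the decomposition of $\ell$ in $O_K$ and the orbits of a Frobenius element: because $\ell$ is unramified in $K$, the prime $\ell$ splits into exactly $g$ primes of residue degrees $f_1,\dots,f_g$, and under the correspondence between primes of $K$ above $\ell$ and orbits of the decomposition group, the Frobenius at $\ell$ acts on $\mathrm{Emb}(K)$ (equivalently on the cosets $\mathrm{Gal}(\widetilde{K}/\Q)/\mathrm{Gal}(\widetilde{K}/K)$) with orbit lengths exactly $f_1,\dots,f_g$. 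Concretely, one fixes a prime $\mathfrak{L}$ of $\widetilde{K}$ above $\ell$, takes $\mathrm{Frob}_\ell$ to be the associated Frobenius in $\mathrm{Gal}(\widetilde{K}/\Q)$, and invokes the orbit--counting description of the splitting of $\ell$ in the intermediate field $K$: the double cosets $\langle \mathrm{Frob}_\ell\rangle \backslash \mathrm{Gal}(\widetilde{K}/\Q)/\mathrm{Gal}(\widetilde{K}/K)$ are in bijection with the primes of $K$ above $\ell$, and the size of each orbit equals the residue degree $f_i$.

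Putting these two pieces together: the permutation $\rho_K(\mathrm{Frob}_\ell)$ decomposes into disjoint cycles of lengths $f_1,\dots,f_g$, so its characteristic polynomial is the product over the cycles of the characteristic polynomials $X^{f_i}-1$, giving
\[
\det(X-\rho_K(\mathrm{Frob}_\ell))=\prod_{i=1}^{g}(X^{f_i}-1).
\]
One should note that $\mathrm{Frob}_\ell$ is only well-defined up to conjugacy, but since both the cycle type of a permutation and the characteristic polynomial are conjugacy invariants, the right-hand side is well-defined, exactly as the arithmetic type $(f_1,\dots,f_g)$ is. The main obstacle is not any single hard computation but rather stating cleanly the identification of orbit lengths with residue degrees; this is classical (it is the same fact underlying Corollary \ref{Frobenius}, where the number of fixed points, i.e.\ cycles of length $1$, equals $a_\ell(K)$), so the proof amounts to invoking this dictionary and the elementary eigenvalue computation for a single cyclic permutation.
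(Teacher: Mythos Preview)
Your proof is correct but takes a different route from the paper. The paper argues via the Euler product: since $\zeta_K(s)=L(\rho_K,s)$ by Proposition~\ref{ZetaLArtin}, one compares the local factor at $\ell$ on both sides. On the zeta side this is $\prod_{i=1}^{g}(1-\ell^{-sf_i})^{-1}$, while on the Artin side it is $\det(I-\ell^{-s}\rho_K(\mathrm{Frob}_\ell))^{-1}$; equating and substituting $X$ for $\ell^{-s}$ (up to the obvious renormalisation) gives the claim. Your approach is more intrinsic: you identify $\rho_K(\mathrm{Frob}_\ell)$ directly as a permutation matrix and read off its characteristic polynomial from the cycle type, invoking the classical correspondence between orbits of Frobenius on $\mathrm{Gal}(\widetilde{K}/\Q)/\mathrm{Gal}(\widetilde{K}/K)$ and primes of $K$ above $\ell$, with orbit lengths equal to residue degrees. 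The paper's argument is slicker in context since Proposition~\ref{ZetaLArtin} is already in hand and no further algebraic number theory is needed; your argument is self-contained at the level of representations (no $L$-functions required) but imports the orbit/residue-degree dictionary, which is of course the same fact underlying the Euler product computation anyway. Both are short and valid.
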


\begin{proof}
Let $B_{1},...,B_{g}$ be the primes in $O_{K}$ lying over the prime $\ell$. Then, the $\ell$-factor in the Euler product for $\zeta_{K}(s)$ is given by \[\prod_{i=1}^{g}(1-|| B_{i} || ^{-s})^{-1}=\prod_{i=1}^{g}(1-\ell^{-sf_{i}})^{-1}.\] On the other hand, since $\zeta_{K}(s)$ is also the Artin L-function of the representation $\rho_{K}$, the $\ell$-factor above is also equal to $\det(I-\ell^{-s}\rho_{K}({\rm Frob}_{\ell}))^{-1}$. The result follows from substituting $\ell^{-s}$ by $X$.

\end{proof}

\subsubsection{An analogy with the isogeny theorem}\label{IsogenyTheorem}
The zeta function $\zeta_{K}(s)$ is the Artin $L$-function of the trivial representation of $\gal(\overline{K}/K)$, however knowing this is not very useful in our context since for two different number fields we would get representations from different groups. By looking at a Galois representation of $G_{\Q}$ for which $\zeta_{K}(s)$ is its Artin $L$-function one can actually obtain results about the number field in question. This, as straightforward as it is, gives a simpler characterization for arithmetic equivalence which is completely reminiscent of Tate's isogeny theorem on rational elliptic curves, where the $G_{\Q}-$module $\displaystyle T_{K} $ plays the role of Tate's module.

\begin{theorem}\label{main}
Let $K$, $K_{1}$ be two number fields. The following are equivalent:

\begin{itemize}

\item[${\rm (i)}$] There is a $\C$-isomorphism of $\displaystyle T_{K} \cong T_{K_{1}} $ as $G_{\Q}$-modules.

\item[${\rm (ii)}$] $\displaystyle \zeta_{K}(s)= \zeta_{K_{1}}(s).$

\item[${\rm (iii)}$] For almost all primes	 $\ell$, $\displaystyle a_{\ell}(K)=a_{\ell}(K_{1}).$

\item[${\rm (iv)}$] For almost all primes $\ell$,  $\displaystyle \#{\rm Spec}(O_{K})(\F_{\ell})=\#{\rm Spec}(O_{K_{1}})(\F_{\ell})$  i.e., $K$ and $K_{1}$ have the same number of $\F_{\ell}$ points.

\end{itemize}

\end{theorem}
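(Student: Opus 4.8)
The plan is to establish the cyclic chain $\mathrm{(i)} \Rightarrow \mathrm{(ii)} \Rightarrow \mathrm{(iii)} \Rightarrow \mathrm{(i)}$ and to dispose of $\mathrm{(iii)} \Leftrightarrow \mathrm{(iv)}$ separately as a formal point-counting identity. For $\mathrm{(i)} \Rightarrow \mathrm{(ii)}$, a $\C[G_{\Q}]$-isomorphism $T_{K} \cong T_{K_{1}}$ is precisely an isomorphism of the representations $\rho_{K} \cong \rho_{K_{1}}$, and since the Artin $L$-function is an invariant of the isomorphism class of a representation, Proposition \ref{ZetaLArtin} yields $\zeta_{K} = L(\rho_{K}, s) = L(\rho_{K_{1}}, s) = \zeta_{K_{1}}$. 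For $\mathrm{(ii)} \Rightarrow \mathrm{(iii)}$, equality of the two Dirichlet series forces equality of all their coefficients, so in fact $a_{\ell}(K) = a_{\ell}(K_{1})$ for every prime $\ell$, not merely almost every one.

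The heart of the argument is $\mathrm{(iii)} \Rightarrow \mathrm{(i)}$. Let $N$ be the compositum of the Galois closures of $K$ and $K_{1}$ and set $G = {\rm Gal}(N/\Q)$, so that both $\rho_{K}$ and $\rho_{K_{1}}$ factor through the finite quotient $G$. By Corollary \ref{Frobenius}, for every prime $\ell$ unramified in $N$ we have ${\rm Trace}(\rho_{K}({\rm Frob}_{\ell})) = a_{\ell}(K)$, and likewise for $K_{1}$; hence the hypothesis says that the characters $\chi_{\rho_{K}}$ and $\chi_{\rho_{K_{1}}}$ agree on the Frobenius class of almost every prime. By the Chebotarev density theorem every conjugacy class of $G$ arises as the Frobenius class of infinitely many unramified primes, so discarding the finite exceptional set costs nothing: the two characters agree on every conjugacy class of $G$ and are therefore equal. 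As a complex representation of a finite group is determined up to isomorphism by its character, we conclude $\rho_{K} \cong \rho_{K_{1}}$, that is, $T_{K} \cong T_{K_{1}}$.

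Finally, for $\mathrm{(iii)} \Leftrightarrow \mathrm{(iv)}$ I would identify $\#{\rm Spec}(O_{K})(\F_{\ell})$ with $a_{\ell}(K)$ directly. An $\F_{\ell}$-point of ${\rm Spec}(O_{K})$ is a ring homomorphism $O_{K} \to \F_{\ell}$; its kernel is a maximal ideal $\mathfrak{p}$ lying over $\ell$, and the induced embedding $O_{K}/\mathfrak{p} \hookrightarrow \F_{\ell}$ forces $O_{K}/\mathfrak{p} \cong \F_{\ell}$, since $\F_{\ell}$ has no proper subfield. Conversely, each degree-one prime over $\ell$ produces exactly one such homomorphism, because the only ring endomorphism of $\F_{\ell}$ is the identity. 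Thus $\#{\rm Spec}(O_{K})(\F_{\ell})$ equals the number of degree-one primes of $O_{K}$ above $\ell$, which is exactly $a_{\ell}(K)$; comparing with $K_{1}$ shows that $\mathrm{(iii)}$ and $\mathrm{(iv)}$ are literally the same condition.

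I expect the main obstacle to be the step $\mathrm{(iii)} \Rightarrow \mathrm{(i)}$: one must upgrade an equality of integers $a_{\ell}$ (equivalently, of Frobenius traces) that holds only for almost all $\ell$ to an equality of the full characters, and thence to an isomorphism of representations. This is exactly where Chebotarev density is indispensable, since it guarantees that ignoring the finitely many ramified or excluded primes loses no conjugacy class, and where the general principle that Frobenius traces determine an Artin representation over $\C$ is invoked.
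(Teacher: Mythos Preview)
Your proof is correct and follows essentially the same route as the paper: the cycle $\mathrm{(i)}\Rightarrow\mathrm{(ii)}\Rightarrow\mathrm{(iii)}\Rightarrow\mathrm{(i)}$ via Proposition~\ref{ZetaLArtin}, uniqueness of Dirichlet series, Corollary~\ref{Frobenius} plus Chebotarev, and character theory of finite groups. The only cosmetic difference is in $\mathrm{(iii)}\Leftrightarrow\mathrm{(iv)}$: the paper chooses a defining polynomial $p(x)$ for $K$ and counts its roots in $\F_{\ell}$ for $\ell\nmid{\rm disc}(p)$, whereas you identify $\#{\rm Spec}(O_{K})(\F_{\ell})$ with the number of degree-one primes above $\ell$ directly via the kernel of a ring map $O_{K}\to\F_{\ell}$, which is slightly cleaner since it works at every prime and avoids the auxiliary polynomial.
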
 

\begin{proof} We first make the following observations:

\begin{itemize}

\item[$\bullet$] By the uniqueness theorem for Dirichlet series we have that   $\zeta_{K}(s)=\zeta_{K_{1}}(s)$ implies that $a_{\ell}(K)=a_{\ell}(K_{1})$ for all prime $\ell$. 

\item [$\bullet$] Thanks to Corollary \ref{Frobenius} $\ a_{\ell}(K)=a_{\ell}(K_{1})$ for all primes $\ell$  implies that \[\displaystyle   {\rm Trace}(\rho_{K}(Frob_{\ell}))={\rm Trace}(\rho_{K_{1}}(Frob_{\ell}))\] for almost all primes $\ell$. 

\item [$\bullet$] By Chebotarev's density theorem  $\displaystyle   {\rm Trace}(\rho_{K}(Frob_{\ell}))={\rm Trace}(\rho_{K_{1}}(Frob_{\ell}))$ for almost all primes $\ell$ is equivalent to $\displaystyle {\rm Trace}(\rho_{K}(g))={\rm Trace}(\rho_{K_{1}}(g))$ for all $g \in G_{\Q}$.

\item [$\bullet$] Since Artin representations have finite images, the fact that $\displaystyle {\rm Trace}(\rho_{K}(g))={\rm Trace}(\rho_{K_{1}}(g))$ for all $g \in G_{\Q}$ implies that $\rho_{K}$ and $\rho_{K_{1}}$  are isomorphic representations.

\item [$\bullet$] If the representations $\rho_{K}$ and $\rho_{K_{1}}$  are isomorphic then, thanks to Proposition \ref{ZetaLArtin},  $\zeta_{K}(s)=\zeta_{K_{1}}(s)$. 
\end{itemize} The above argument shows the equivalence between (i), (ii) and (iii). Suppose that $K$ is defined by a monic polynomial $p(x) \in \Z[x]$, and suppose that $\ell \nmid {\rm disc}(p)$. The equivalence with (iv) follows since \[ \#{\rm Spec}(O_{K})(\F_{\ell}) =\{ \alpha \in \F_{\ell} \mid f(\alpha) =0 \}=\#\{f  \in A_{K}(\ell) \mid f =1\}=a_{\ell}(K).\]
\end{proof}

\begin{remark}
Conditions (iii) or (iv) in Theorem \ref{main} are a priori weaker conditions for arithmetic equivalence than the ones given by Perlis and others; even though condition (iii) seems quite natural as an equivalence for the equality between Dedekind zeta functions it's not normally mentioned in this form. Here is usual formulation of this equivalence:
\end{remark}

\begin{corollary}\label{ElResultado}

Let $K, K_{1}$ be two number fields. Then $K$ and $K_{1}$ are arithmetically equivalent if and only if for almost all rational primes $\ell$ 
\[\#\{f  \in A_{K}(\ell) \mid f =1\} =\#\{f  \in A_{K_{1}}(\ell) \mid f =1\}.\]

\end{corollary}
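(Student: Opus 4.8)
The plan is to deduce this directly from the equivalence of conditions (ii) and (iii) in Theorem \ref{main}, since the statement is essentially a restatement of that equivalence once the relevant quantities are identified with the objects appearing here. First I would recall that, by definition, $K$ and $K_{1}$ are arithmetically equivalent precisely when $\zeta_{K}(s)=\zeta_{K_{1}}(s)$, which is exactly condition (ii) of the theorem.

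Next I would unwind the meaning of $a_{\ell}(K)$. As noted in the discussion preceding Theorem \ref{main}, the value $a_{\ell}(K)$ equals the number of $1$'s appearing in the tuple $A_{K}(\ell)$; that is,
\[ a_{\ell}(K) = \#\{\, f \in A_{K}(\ell) \mid f = 1 \,\}. \]
Consequently, the requirement that $\#\{\, f \in A_{K}(\ell) \mid f = 1 \,\} = \#\{\, f \in A_{K_{1}}(\ell) \mid f = 1 \,\}$ for almost all primes $\ell$ is literally the assertion that $a_{\ell}(K)=a_{\ell}(K_{1})$ for almost all $\ell$, which is condition (iii) of Theorem \ref{main}.

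With these two identifications in place, the corollary is immediate: Theorem \ref{main} establishes the equivalence (ii) $\Longleftrightarrow$ (iii), and substituting the two reformulations above yields exactly the claimed equivalence between arithmetic equivalence and the almost-everywhere agreement of the degree-one counts. I do not expect any genuine obstacle here, as the substantive content is already carried by Theorem \ref{main}; the only point requiring care is the bookkeeping identification of $a_{\ell}(K)$ with the number of residue degrees equal to $1$ in $A_{K}(\ell)$, which holds by definition for every prime $\ell$ and in particular for almost all $\ell$.
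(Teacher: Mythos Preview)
Your proof is correct and follows essentially the same approach as the paper: identify $a_{\ell}(K)$ with $\#\{f \in A_{K}(\ell) \mid f=1\}$ and then invoke the equivalence of (ii) and (iii) in Theorem~\ref{main}.
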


\begin{proof}
Since  \[a_{\ell}(K)=  \#\{\mathcal{B} \in {\rm Max}(O_K) \mid  [O_{K}: \mathcal{B}] =\ell \}  = \#\{f  \in A_{K}(\ell) \mid f =1\}\] the result follows from Theorem \ref{main} \end{proof}

\subsubsection{Invariants under arithmetic equivalence}\label{InvAritEquiv}

Some of the invariants determined by arithmetic equivalence are the degree, the discriminant, the signature, the Galois closure, the roots of unity and the unit group (see for example \cite[III, \S1, Theorem 1.1]{Klingen}). All of them can be easily explained by the Galois representation $\rho_{K}$. For instance the degree is the dimension of $\rho_{K}$, the number of real embeddings of $K$ is ${\rm Trace}(\rho_{K}({\rm complex \ conjugation}))$, the discriminant is equal to the conductor of $\rho_{K}$ (see \cite[VI, \S3, Corollary 1]{Serre2}), etc.\\  

\noindent Other invariants determined under arithmetic equivalence are the rational trace form, or under some ramification conditions the integral trace form. To see how those can be deduced also from $\rho_{K}$ the reader can see \cite{MantillaS} or \cite{Perlis}.\\

\paragraph{{\it Quasi-conjugate subgroups}}

The classic group theoretical characterization of Perlis \cite{Perlis1} and Gassmann \cite{Gassmann} for arithmetic equivalence, see Theorem \ref{EldePerlis} (c)-(a), can be made quite clear from the point of view of the representation $\rho_{K}$. More precisely:

\begin{corollary}\label{ZetaQuasiConj} Let $K, K_{1}$ be number fields and let $N$ be a Galois number field such that $KK_{1} \subseteq N.$ Let $G:={\rm Gal}(N/\Q)$, and $H$, $H_{1}$ be the sub-groups of $G$ corresponding to $K$ and $K_{1}$ via Galois correspondence. Then $\zeta_{K}(s)=\zeta_{K_{1}}(s)$ if and only if $H$ and $H_{1}$ are quasi-conjugate in $G$.
\end{corollary}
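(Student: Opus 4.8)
The plan is to translate the equality of zeta functions into an isomorphism of Galois representations and then read off the quasi-conjugacy condition from the induced character formula. First I would record that, since $N$ is Galois over $\Q$ and contains both $K$ and $K_{1}$, it contains their Galois closures $\widetilde{K}$ and $\widetilde{K_{1}}$; hence both $\rho_{K}$ and $\rho_{K_{1}}$ factor through the quotient $G_{\Q} \twoheadrightarrow G = {\rm Gal}(N/\Q)$. Running the argument of Proposition \ref{ZetaLArtin} with $N$ in place of the Galois closure identifies $\rho_{K}$, viewed as a representation of $G$, with the permutation representation of $G$ on the cosets $G/H$, i.e. with ${\rm Ind}_{H}^{G} 1_{H}$; likewise $\rho_{K_{1}} \cong {\rm Ind}_{H_{1}}^{G} 1_{H_{1}}$. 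Because two representations of $G_{\Q}$ inflated from $G$ are $G_{\Q}$-isomorphic precisely when they are $G$-isomorphic, Theorem \ref{main} gives that $\zeta_{K}(s) = \zeta_{K_{1}}(s)$ if and only if ${\rm Ind}_{H}^{G} 1_{H} \cong {\rm Ind}_{H_{1}}^{G} 1_{H_{1}}$ as complex $G$-representations.

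Next I would compute characters. Since these are complex representations of the finite group $G$, isomorphism is equivalent to equality of characters. For $g \in G$ lying in the conjugacy class $C$, the character of the permutation representation on $G/H$ counts the fixed cosets, and the standard bookkeeping gives
\[\chi_{{\rm Ind}_{H}^{G} 1_{H}}(g) = \#\{xH \in G/H : x^{-1}gx \in H\} = \frac{|G|}{|H|\,|C|}\,\#(C \cap H).\]
Writing the analogous expression for $H_{1}$ and cancelling the common factor $|G|/|C|$, the condition $\chi_{{\rm Ind}_{H}^{G} 1_{H}} = \chi_{{\rm Ind}_{H_{1}}^{G} 1_{H_{1}}}$ becomes $\tfrac{1}{|H|}\#(C \cap H) = \tfrac{1}{|H_{1}|}\#(C \cap H_{1})$ for every conjugacy class $C$ of $G$.

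Finally I would extract the quasi-conjugacy statement. Evaluating the displayed identity at the identity class $C = \{1\}$ forces $[G:H] = [G:H_{1}]$, hence $|H| = |H_{1}|$; with this equality in hand the condition reduces to $\#(C \cap H) = \#(C \cap H_{1})$ for all $C$, which is exactly the definition of quasi-conjugacy. Conversely, if $H$ and $H_{1}$ are quasi-conjugate then $|H| = |H_{1}|$ by Remark \ref{ElCuasiConjugados}, the characters agree, the induced representations are isomorphic, and $\zeta_{K}(s) = \zeta_{K_{1}}(s)$ follows from Theorem \ref{main}. The only points requiring care are the identification of $\rho_{K}$ with ${\rm Ind}_{H}^{G} 1_{H}$ over the possibly larger field $N$ rather than $\widetilde{K}$, and the induced-character computation together with the observation that evaluation at the identity pins down $|H| = |H_{1}|$; neither is deep, so I anticipate no serious obstacle, the content being essentially the classical Gassmann--Perlis equivalence repackaged through the representation $\rho_{K}$.
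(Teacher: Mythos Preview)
Your proof is correct and follows essentially the same route as the paper: identify $\rho_{K}$ with ${\rm Ind}_{H}^{G}1_{H}$ over $N$, invoke Theorem \ref{main} to reduce $\zeta_{K}=\zeta_{K_{1}}$ to an isomorphism of induced representations, and then use the induced-character formula together with evaluation at the identity class to recover the quasi-conjugacy condition. The only cosmetic difference is that the paper isolates the character computation as a separate lemma (Lemma \ref{LemmaQuasi}) and uses Artin's formalism on $L$-functions directly for one implication, whereas you inline the lemma and pass through Theorem \ref{main} uniformly in both directions.
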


\begin{proof}
 Since $N$ is Galois over $\Q$ it contains $\widetilde{K}$ and $\widetilde{K}_{1}$. Using Artin's formalism as in the proof of Proposition \ref{ZetaLArtin} we see that $\zeta_{K}(s)=L(\rho_{K}, s)=L\left({\rm Res}_{N} ^{\overline{\Q}}(\rho_{K}), s\right) =L \left( {\rm Ind}_{H}^{G} 1_{H} , s \right)$, resp. the analog statement for $H_{1}$. Therefore, thanks to Lemma \ref{LemmaQuasi}, if $H$ and $H_{1}$ are quasi conjugate then $\zeta_{K}(s)=\zeta_{K_{1}}(s)$. On the other hand if $\zeta_{K}(s)=\zeta_{K_{1}}(s)$ then we see, from Theorem \ref{main}, that the representations $\rho_{K}$ and $\rho_{K_{1}}$ are isomorphic. Restricting this isomorphism to $G_{N}={\rm Gal}(\overline{\Q}/N)$ it follows, from Lemma \ref{LemmaQuasi}, that $H$ and $H_{1}$ are quasi-conjugate since $\displaystyle {\rm Res}_{N} ^{\overline{\Q}}(\rho_{K}) \cong  {\rm Ind}_{H}^{G} 1_{H}$ and $\displaystyle {\rm Res}_{N} ^{\overline{\Q}}(\rho_{K}) \cong  {\rm Ind}_{H_{1}}^{G} 1_{H_{1}}$
\end{proof}

\begin{lemma}\label{LemmaQuasi}

Let $G$ be a finite group and let $H$ and $H_{1}$ two subgroups. Then $H$ and $H_{1}$ are quasi-conjugate if and only if $\displaystyle {\rm Ind}_{H}^{G} 1_{H} \cong {\rm Ind}_{H_{1}}^{G} 1_{H_{1}}.$
\end{lemma}

\begin{proof}
Let $\chi_{H}$ be the character afforded by the representation ${\rm Ind}_{H}^{G} 1_{H}$ and let $C$ be a conjugacy class in $G$. A calculation shows that  \[\chi_{H}(C)=\frac{\#(C \cap H)\#G}{\#C\#H}.\] Taking the trivial conjugacy class we see that the order of $H$ is determined by the representation, hence the result follows the above equality and from the definition of quasi-conjugate subgroups(see Remark \ref{ElCuasiConjugados}) .
\end{proof}

\section{Proofs of our results}\label{LasPruebas}

Let $K$ be a number field with maximal order $O_{K}$ and let $\ell$ be a rational prime. Recall that the arithmetic type of $\ell$ in $K$ is the tuple $A_{K}(\ell)=(f^{K}_{1},...,f^{K}_{g_{\ell}})$ written in ascending order where $g_{\ell}^{K}$ is the number of prime factors of $\ell$ in $O_{K}$ and the $f_{i}'s$ are the residue degrees of $\ell$. Let $e^{K}_{i}$ be the ramification degree corresponding to the residue degree $f^{K}_{i}$. We call {\it the factorization type} of a prime $\ell$ the two element ordered set \[\{(f_1,...,f_g), (e_1,...,e_g)\} \  \] where the first tuple is the arithmetic type and the second is the tuple of ramification indices corresponding to each residue degrees. In this section we study the possible factorization types for primes in septic number fields.

\subsection{${\rm PSL}_{2}(\F_{7})$ number fields.}

Since septic number fields that are not arithmetically solitary are ${\rm PSL}_{2}(\F_{7})$ fields it is of interest for us to study what happens with ramification in  ${\rm PSL}_{2}(\F_{7})$ number fields.

\begin{proposition}\label{Esporadicos}

Let $K$ be a septic number field with Galois closure having Galois group isomorphic to ${\rm PSL}_{2}(\F_{7})$. If $\ell$ is a rational prime, then its factorization type is not equal to $\mathcal{T}:=\{(1,2,2), (3,1,1)\}$
\end{proposition}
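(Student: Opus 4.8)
The plan is to translate the claim about the factorization type $\mathcal{T}=\{(1,2,2),(3,1,1)\}$ into a purely group-theoretic statement about the decomposition group of $\ell$ inside $G={\rm PSL}_{2}(\F_{7})$, and then rule it out by examining the subgroups of $G$. Recall that $K$ corresponds to a point stabilizer $H={\rm Gal}(\widetilde{K}/K)$ of index $7$ in $G$, so $G$ acts faithfully and transitively on the $7$ cosets $G/H$. The factorization of $\ell$ in $O_{K}$ is governed by the action of a decomposition group $D\leq G$ on these $7$ cosets: each $D$-orbit corresponds to a prime $B_{i}$ above $\ell$, the inertia subgroup $I\trianglelefteq D$ is cyclic (tame case) or a $p$-group extension (wild case), the ramification index $e_{i}$ is the size of the $I$-orbit, and the residue degree $f_{i}$ is the number of $I$-orbits in that $D$-orbit, i.e. the order of the cyclic quotient $D/I$ acting on them. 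The type $\mathcal{T}$ would require three $D$-orbits on $\{1,\dots,7\}$ of sizes (coming from $e_if_i$) equal to $\{1\cdot 3, 2\cdot 1, 2\cdot 1\}=\{3,2,2\}$, with inertia acting so that the size-$3$ orbit has $e=1,f=3$ (unramified, $I$ trivial on it) while the two size-$2$ orbits have $e=2,f=1$ (totally ramified, $I$ acting transitively).

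First I would fix the concrete realization of $G={\rm PSL}_{2}(\F_{7})$ of order $168$ acting on the $7$ points of the projective line $\mathbb{P}^{1}(\F_{7})$, for which $H$ is the Borel (upper-triangular) subgroup of order $24$. The key structural input is that the inertia group $I$ is normal in the decomposition group $D$ with cyclic quotient $D/I$, and (in the tame case, which is the relevant one by the corollary's framing) $I$ itself is cyclic. So I would look for a cyclic subgroup $I$ and an element generating $D/I$ whose combined action on $\mathbb{P}^{1}(\F_{7})$ produces the orbit-and-inertia pattern described above. Concretely, I need $I$ to fix exactly three points (the $f=3$, $e=1$ orbit must be pointwise fixed by $I$) and to act as a product of two transpositions on the remaining four points (giving the two $e=2$ orbits); then $D/I$ must permute those three fixed points in a single $3$-cycle while preserving the two transposition-orbits as blocks of size $1$ each under the quotient.

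The main obstacle is the arithmetic of orders in ${\rm PSL}_{2}(\F_{7})$: its element orders are only $1,2,3,4,7$, and the cycle types of these elements acting on $7$ points are sharply constrained (an involution in $G$ acts as a product of two $2$-cycles fixing three points; an order-$3$ element has cycle type fixing one point with two $3$-cycles; order-$4$ elements and order-$7$ elements have their own rigid shapes). I expect the contradiction to emerge from trying to assemble $I$ and $D$ simultaneously: the involution that fixes three points and moves the other four as a double transposition is exactly the right candidate for $I$ of order $2$, but then I must find an element normalizing $\langle I\rangle$ that induces a $3$-cycle on $I$'s three fixed points while stabilizing the two transposition-pairs --- and the centralizer/normalizer structure of such an involution in ${\rm PSL}_{2}(\F_{7})$ will not contain an order-$3$ element acting in this way. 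Thus the required decomposition group $D$ simply does not exist as a subgroup of $G$, which yields the proposition. The cleanest way to close the argument is to enumerate, up to conjugacy, the cyclic subgroups of $G$ that fix exactly three of the seven points, compute the action of their normalizers on those three fixed points, and observe that no normalizer surjects onto a $3$-cycle on them compatibly with fixing the block structure on the remaining four points; a short MAGMA or by-hand check over the handful of conjugacy classes of $G$ suffices, and I would present it as a finite case analysis rather than a computation to be ground out.
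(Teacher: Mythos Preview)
Your overall strategy---translating the factorization type into a constraint on how a decomposition group $D$ and its inertia subgroup $I\trianglelefteq D$ act on the seven cosets $G/H$---is sound and in fact leads to a shorter argument than the paper's. However, you have swapped the roles of $e$ and $f$ when reading $\mathcal{T}=\{(1,2,2),(3,1,1)\}$. By the paper's convention the \emph{first} tuple records the residue degrees $f_i$ and the \emph{second} the ramification indices $e_i$; so $\mathcal{T}$ describes one prime with $f=1,\ e=3$ and two primes with $f=2,\ e=1$. In the coset picture this means a $D$-orbit of size $3$ on which $I$ acts \emph{transitively}, together with two $D$-orbits of size $2$ each \emph{pointwise fixed} by $I$ (with $D/I$ swapping the two points in each). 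Your analysis---$I$ an involution of shape $(2,2,1,1,1)$ with $D/I$ cycling the three $I$-fixed points---is really treating the type $\{(1,1,3),(2,2,1)\}$, which is not the proposition's claim. You also restrict to tame $\ell$ by appeal to a later corollary, but the proposition is stated for all primes.

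With the correct reading the contradiction is immediate and needs no tameness hypothesis: $I$ must have a single orbit of size $3$ and fix the remaining four points, so $3\mid |I|$ and every element of $I$ fixes those four points; but every order-$3$ element of ${\rm PSL}_2(\F_7)$ acting on $\mathbb{P}^1(\F_7)$ has cycle type $(3,3,1)$ and hence exactly one fixed point. The paper takes a different route: it uses multiplicativity of $e,f,g$ in the tower $\Q\subset K\subset L=\widetilde K$ to pin down the order of the decomposition group in $L/\Q$ to one of $\{6,12,24\}$, then eliminates each value by inspecting which index-$24$ intersections can occur in the subgroup lattice of ${\rm PSL}_2(\F_7)$. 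Your coset-action viewpoint is more direct and avoids the tower bookkeeping, once the $e/f$ reading is corrected.
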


\begin{proof}
Let $\ell$ be a prime and suppose that its factorization type is equal to $\mathcal{T}$. Let $P_{1}, P_{2}$ and $P_{3}$ be the primes in $O_{K}$ lying over $\ell$ and such that \[\ell O_{K} = P_{1}^{3} P_{2}P_{3}.\] Let $L$ be the Galois closure of $K$ over $\Q$. For each $i =1,2,3$ let $e_{i}$, $f_{i}$ and $g_{i}$ be respectively the ramification index, inertia degree and number of prime factors in $O_{L}$ of the prime $P_{i}$. By the hypothesis on $K$ and $\ell$ we have that \begin{align*} e_{i}f_{i}g_{i}=24\end{align*} for all $i =1,2,3$. Moreover, if $e$, $f$ and $g$ are the respective values for the extension $L/\Q$ and the prime $\ell$ then \begin{align*}
e&= 3e_{1}=e_{2}=e_{3}\\
   f    &=f_{1}=2f_{2}=2f_{3}\\
    g   &=g_{1}+g_{2}+g_{3}
\end{align*}
It follows from the above equations that $g_{2}=g_{3}$ and that $2g_{1}=3g_{2}$. In particular, $g=\frac{7}{2}g_{2}$ is a multiple of $7$. Therefore $ef$, which is the order of a decomposition group over $\ell$ in the extension $L/\Q$, must be a divisor of $24$. Since $3 \mid e$ and $2 \mid f$ we have that $ef \in \{6, 12, 24\}$. Now, let $D_{i} \leq {\rm Gal}(L/K)$ be a decomposition subgroup of  for the prime $P_{i}$. Since decomposition groups can be extended there is, for the prime  $\ell$, a decomposition subgroup  $E_{i} \leq {\rm Gal}(L/\Q) \cong {\rm PSL}_{2}(\F_{7}) $  such that $E_{i} \cap {\rm Gal}(L/K) =D_{i}$. Thus the group $E_{i}$, which has order $ef$, has for each $i$ a subgroup of order $e_{i}f_{i}$. We recall the lattice of sub-groups of ${\rm PSL}_{2}(\F_{7})$, modulo conjugacy:
\[
\begin{tikzpicture}[scale=1.0,sgplattice]
  \node[char] at (4.25,0) (1) {\gn{C1}{\{1\}}};
  \node at (4.25,0.803) (2) {\gn{C2}{\Z/2\Z}};
  \node at (0.5,1.89) (3) {\gn{C3}{\Z/3\Z}};
  \node at (0.125,3.44) (4) {\gn{C7}{\Z/7\Z}};
  \node at (8,1.89) (5) {\gn{C2^2}{(\Z/2\Z)^2}};
  \node at (5.5,1.89) (6) {\gn{C2^2}{(\Z/2\Z)^2}};
  \node at (3,1.89) (7) {\gn{C4}{\Z/4\Z}};
  \node at (2.12,3.44) (8) {\gn{S3}{S_3}};
  \node at (1.12,4.89) (9) {\gn{C7:C3}{\Z/7\Z{\rtimes}\Z/3\Z}};
  \node at (6.38,3.44) (10) {\gn{D4}{D_8}};
  \node at (4.25,3.44) (11) {\gn{A4}{A_4}};
  \node at (8.38,3.44) (12) {\gn{A4}{A_4}};
  \node at (7.38,4.89) (13) {$H_{2} \cong$ \gn{S4}{S_4}};
  \node at (4.25,4.89) (14) {$H_{1} \cong$ \gn{S4}{S_4}};
  \node[char] at (4.25,5.84) (15) {\gn{PSL(2,7)}{{\rm PSL}_2({\mathbb F}_7)}};
  \draw[lin] (1)--(2) (1)--(3) (1)--(4) (2)--(5) (2)--(6) (2)--(7) (2)--(8)
     (3)--(8) (3)--(9) (4)--(9) (5)--(10) (6)--(10) (7)--(10) (3)--(11)
     (6)--(11) (3)--(12) (5)--(12) (12)--(13) (8)--(13) (10)--(13) (8)--(14)
     (10)--(14) (11)--(14) (13)--(15) (14)--(15) (9)--(15);
  \node[cnj=2] {};
  \node[cnj=3] {};
  \node[cnj=4] {};
  \node[cnj=5] {};
  \node[cnj=6] {};
  \node[cnj=7] {};
  \node[cnj=8] {};
  \node[cnj=9] {};
  \node[cnj=10] {};
  \node[cnj=11] {};
  \node[cnj=12] {};
  \node[cnj=13] {};
  \node[cnj=14] {};
\end{tikzpicture}
\]

We show separately that neither of the possibilities, $\{6,12,24\}$, can't occur as the value of $ef$: \\

\begin{itemize}

\item $ef=12.$ It follows from the equations above  that $e_{2}f_{2}=6$. Hence, the order 12 group $E_{i}$ has an order $6$ subgroup. This is a contradiction since $A_{4}$ has no subgroups of order $6$ and, see diagram above, every subgroup of ${\rm PSL_{2}}(\F_{7})$ of order $12$ is isomorphic to $A_{12}$.\\

\item $ef=24.$ It follows from the equations above  that $g_{1}=3$, $e_{2}f_{2}=12$ and $g_{2}=g_{3}=2$. Since none of the $g_{i}$'s is equal to $1$ the group ${\rm Gal}(L/K)$ can't be conjugate to a decomposition group over $\ell$; otherwise $K$ would be the fixed field of a decomposition group of a prime $\mathcal{B}$ in $O_{L}$ lying over $\ell$. In particular the prime $P:=\mathcal{B} \cap O_{K}$ would have only one prime factor in $O_{K}$, which is a contradiction since $P=P_{i}$ for some $i$. Therefore we may assume that $E_{i}$ is not conjugate to ${\rm Gal}(L/K)$. Looking at the lattice of subgroups of ${\rm PSL_{2}}(\F_{7})$ we see that no subgroup of order $12$ is the intersection of two non-conjugate subgroups of order $24$. \\

\item $ef=6.$ It follows from the equations above that $e_{1}f_{1}=3$ and $e_{2}f_{2}=2$. From the lattice of subgroups we see that the intersection of a group of order 24 with one of order 6 can't have order $2$.
\end{itemize} 

\end{proof}

\begin{proposition}\label{Difi}

Let $K$ be a septic number field with Galois closure having Galois group isomorphic to ${\rm PSL}_{2}(\F_{7})$. If $\ell$ is a rational prime, then its factorization type is not equal to either   $\{(1,2), (3,2)\}$ nor $\{(1,2), (5,1)\}$.

\end{proposition}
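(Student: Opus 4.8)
The plan is to follow the template of Proposition \ref{Esporadicos}: assume one of the two factorization types occurs, pass to the Galois closure $L$ with $G={\rm Gal}(L/\Q)\cong {\rm PSL}_2(\F_7)$ and $H={\rm Gal}(L/K)$ of order $24$ (so $H\cong S_4$), and extract a contradiction from the subgroup lattice together with the constraints that decomposition and inertia groups impose. The type $\{(1,2),(5,1)\}$ is the easy one: here the prime of residue degree $1$ carries ramification index $5$, so some prime $P$ of $O_K$ over $\ell$ has $e(P/\ell)=5$. Choosing a prime $\mathfrak{B}$ of $O_L$ over $P$, the index $e(P/\ell)$ divides $e(\mathfrak{B}/\ell)=\#I(\mathfrak{B}/\ell)$, which divides $\#G=168$. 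Since $5\nmid 168$, I would dispose of this type in one line.

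For $\{(1,2),(3,2)\}$ I would write $\ell O_K=P_1^3P_2^2$ with $f(P_1/\ell)=1$ and $f(P_2/\ell)=2$, fix a prime $\mathfrak{B}_i$ of $O_L$ over each $P_i$, and set $e=e(\mathfrak{B}_i/\ell)$, $f=f(\mathfrak{B}_i/\ell)$ (independent of $i$ since $L/\Q$ is Galois), together with the relative data $\epsilon_i=e(\mathfrak{B}_i/P_i)$, $\phi_i=f(\mathfrak{B}_i/P_i)$, and $\gamma_i$ for $L/K$. Multiplicativity in the tower $\Q\subset K\subset L$ gives $e=3\epsilon_1=2\epsilon_2$ and $f=\phi_1=2\phi_2$, with $\epsilon_i\phi_i\gamma_i=[L:K]=24$. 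Integrality of $\gamma_1$ and $\gamma_2$ forces $ef\mid 72$ and $ef\mid 96$, hence $ef\mid 24$; and since $e_1=3,e_2=2$ divide $e$ while $f_2=2$ divides $f$, one gets $6\mid e$ and $2\mid f$. Therefore $e=\#I(\mathfrak{B}_i/\ell)\in\{6,12,24\}$, so by the lattice the inertia group $I:=I(\mathfrak{B}_i/\ell)$ is isomorphic to $S_3$, $A_4$, or $S_4$.

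The argument then splits on $e$, exploiting that $I\trianglelefteq D$, where $D=D(\mathfrak{B}_i/\ell)$ has order $ef$ and $D/I$ is cyclic. If $e=24$ then $f=1$, contradicting $2\mid f$. If $e=6$ then $I\cong S_3$, while $D$ has order $12$ or $24$, i.e. $D\cong A_4$ or $S_4$; since $S_3$ is not even a subgroup of $A_4$ and is not normal in $S_4$, no such $D$ exists. This handles the subcases $(e,f)=(6,2)$ and $(6,4)$ at once.

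The only delicate case is $e=12$: here $I\cong A_4$, $ef=24$, $D\cong S_4$, and $A_4\trianglelefteq S_4$ with cyclic quotient $C_2$ is perfectly consistent, so the coarse invariants $e,f$ cannot settle it and I must descend one further level. For the prime $P_2$ I would use that $I(\mathfrak{B}_2/P_2)=I(\mathfrak{B}_2/\ell)\cap H$, whose order is $\epsilon_2=e/e_2=12/2=6$; but $I(\mathfrak{B}_2/\ell)\cong A_4$ has no subgroup of order $6$, a contradiction. I expect this final step to be the main obstacle, since it is the one configuration that survives the order and normality counts, and overcoming it requires passing from the absolute inertia group to the relative inertia group $I(\mathfrak{B}_2/\ell)\cap H$ and invoking the nonexistence of order-$6$ subgroups in $A_4$.
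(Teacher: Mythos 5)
Your proof is correct and follows the same overall strategy as the paper's: dispose of $\{(1,2),(5,1)\}$ via $5\nmid 168$, then for $\{(1,2),(3,2)\}$ pass to the Galois closure, use multiplicativity of $e$ and $f$ in the tower $\Q\subset K\subset L$ to force $6\mid e$, $2\mid f$ and $ef\mid 24$, and kill each remaining case with the subgroup lattice of ${\rm PSL}_2(\F_7)$. Two of your steps deviate mildly from the paper, both soundly. First, you obtain $ef\mid 24$ from $ef\mid \gcd(72,96)$, whereas the paper derives $4g_1=3g_2$, hence $7\mid g$, and uses $efg=168$; the conclusions agree. Second, and more substantively, in the subcase $e=6$, $f=4$ the paper argues inside $H={\rm Gal}(L/K)$: the relative decomposition group $D_1$ would have order $e_1f_1=8$ with a cyclic quotient of order $4$, impossible since every order-$8$ subgroup of ${\rm PSL}_2(\F_7)$ is dihedral. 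You instead argue at the level of $G$: an order-$6$ inertia group would have to lie in $D\cong A_4$ (impossible, no order-$6$ subgroup) or be normal in $D\cong S_4$ (impossible, the normal subgroups of $S_4$ are $1$, $V_4$, $A_4$, $S_4$). Your version handles $(e,f)=(6,2)$ and $(6,4)$ uniformly and is arguably cleaner; the delicate case $e=12$ you resolve exactly as the paper does, via $I(\mathfrak{B}_2/P_2)=I(\mathfrak{B}_2/\ell)\cap H$ of order $\epsilon_2=6$ sitting inside an inertia group isomorphic to $A_4$, which has no such subgroup.
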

 
\begin{proof}
The case $\{(1,2), (5,1)\}$ is clear since $5 \nmid 168.$ Let $\ell$ be a prime and suppose that its factorization type is equal to $\{(1,2), (3,2)\}$. Let $P_{1}$ and $P_{2}$ be the primes in $O_{K}$ lying over $\ell$ and such that \[\ell O_{K} = P_{1}^{3} P_{2}^{2}.\] Let $L$ be the Galois closure of $K$ over $\Q$. For each $i =1,2$ let $e_{i}$, $f_{i}$ and $g_{i}$ be respectively the ramification index, inertia degree and number of prime factors in $O_{L}$ of the prime $P_{i}$. By the hypothesis on $K$ and $\ell$ we have that \begin{align*} e_{i}f_{i}g_{i}=24\end{align*} for all $i =1,2$. Moreover, if $e$, $f$ and $g$ are the respective values for the extension $L/\Q$ and the prime $\ell$ then \begin{align*}
e&= 3e_{1}=2e_{2}\\
   f    &=f_{1}=2f_{2}\\
    g   &=g_{1}+g_{2}.
\end{align*}
It follows from the above equations that $4g_{1}=3g_{2}$. In particular, $g=\frac{7}{4}g_{2}$ is a multiple of $7$. Therefore $ef$, which is the order of a decomposition group over $\ell$ in the extension $L/\Q$, must be a divisor of $24$. Since $6 \mid e$ and $2 \mid f$ we have that $ef \in \{12, 24\}$. As before, we deal with each possible value of $ef$ separately: \\

\begin{itemize}

\item $ef=12.$ It follows from the equations above that $e=6$. Since the inertia subgroup at $\ell$ has order $e=6$ and  $A_{4}$ has no subgroups of order $6$ this case can't happen. \\

\item $ef=24.$ From the equations we have that $e_{1}f_{1}=8$ and $e_{2}f_{2}=6$. Furthermore, either $e=12$ or $e=6$. In the former case $e_{2}=6$ and then we would have a group of order $12$, inertia, with a subgroup of order $6$ which is impossible in ${\rm PSL}_{2}(\F_{7})$. In the latter case $e_{1}=2$ and $f_{1}=4$, therefore $D_{1}$ is an order $8$ group with a cyclic quotient of order $4$; this is a contradiction since ${\rm PSL}_{2}(\F_{7})$ has no such a subgroup.

\end{itemize}
\end{proof}

\begin{remark}\label{ElRemarque}
Similarly to Proposition \ref{Difi} there is no ${\rm PSL}_{2}(\F_{7})$ septic field $K$ and a prime $\ell$ such that its factorization type is $\{(1,2), (1,3)\}$. This, together with the last proposition, shows that in $K$ the arithmetic type of a prime $\ell$ can not ever be $(1,2)$. We do not prove this here since we already have the necessary material to prove one or our main results:
\end{remark}

\begin{theorem}\label{Cases} Let $K$ be a degree $7$ number field, and let $\ell$ be a rational prime. Suppose that the arithmetic type of $\ell$ in $K$ does not belong to \[\{(1,3), (1,1,2), (1,1,1,2)\}.\] Then for any $K'$ arithmetically equivalent to $K$ \[e^{K}_{1}+...+e^{K}_{g}=e^{K'}_{1}+...+e^{K'}_{g}.\]

\end{theorem}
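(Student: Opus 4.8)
The plan is to reduce to the $\mathrm{PSL}_2(\mathbb{F}_7)$ case and then run a finite Diophantine analysis of the admissible ramification tuples, invoking the earlier propositions to eliminate the two problematic types that are not on the exceptional list.

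First I would dispose of the trivial reduction. If $K$ is arithmetically solitary then any $K'$ arithmetically equivalent to $K$ is isomorphic to $K$, so the factorization of $\ell$ — and in particular $\sum_i e_i$ — agrees, and there is nothing to prove. Hence I may assume $K$ is not arithmetically solitary, so by the classification recalled after Theorem \ref{EldePerlis} its Galois closure has group ${\rm PSL}_2(\mathbb{F}_7)$. Since the Galois closure and the Dedekind zeta function are invariants of arithmetic equivalence (see \S\ref{InvAritEquiv}), $K'$ is again a ${\rm PSL}_2(\mathbb{F}_7)$ field with $\widetilde{K'}=\widetilde{K}$; and because the local Euler factor of $\zeta_K$ at $\ell$ depends only on the residue degrees, $K$ and $K'$ share the common arithmetic type $(f_1,\dots,f_g)$ at $\ell$. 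Thus Propositions \ref{Esporadicos}, \ref{Difi} and Remark \ref{ElRemarque} apply to both fields.

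Next I would recast the problem purely combinatorially. Both ramification tuples $(e_i^{K})$ and $(e_i^{K'})$ are solutions in positive integers of the fundamental identity
\[
\sum_{i=1}^{g} e_i f_i = 7 .
\]
For most arithmetic types this equation pins down $\sum_i e_i$ uniquely, and I would check this type by type over the finitely many admissible tuples $(f_1,\dots,f_g)$ (those for which the displayed equation is solvable). When all $f_i=1$ it is automatic, since then $\sum_i e_i=\sum_i e_i f_i=7$; in the remaining types a short count shows the sum is forced unless the tuple contains a degree-one prime that can trade ramification against a higher-degree prime. Carrying out this enumeration I expect exactly five types with a non-unique ramification sum, namely the three listed exceptions $(1,3),(1,1,2),(1,1,1,2)$ together with the two stray types $(1,2)$ and $(1,2,2)$.

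It then remains to eliminate the two stray types using the group-theoretic input. For $(1,2)$, Remark \ref{ElRemarque} shows this arithmetic type never occurs in a ${\rm PSL}_2(\mathbb{F}_7)$ field, so the hypothesis is vacuous. For $(1,2,2)$ the equation $e_1+2e_2+2e_3=7$ admits exactly two factorization types, $\{(1,2,2),(1,1,2)\}$ with $\sum_i e_i=4$ and $\{(1,2,2),(3,1,1)\}$ with $\sum_i e_i=5$; Proposition \ref{Esporadicos} excludes the latter, so the sum is forced to be $4$ for both $K$ and $K'$. This closes the argument. The main obstacle is the bookkeeping in the enumeration step: one must verify completeness of the list of admissible arithmetic types and confirm that the only problematic ones outside the stated exceptional set are precisely $(1,2)$ and $(1,2,2)$ — after which the genuinely arithmetic content is supplied entirely by the already-established Propositions \ref{Esporadicos}, \ref{Difi} and Remark \ref{ElRemarque}.
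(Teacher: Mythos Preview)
Your proposal is correct and follows essentially the same route as the paper: reduce to ${\rm PSL}_2(\F_7)$ septic fields, enumerate all admissible arithmetic types via $\sum e_if_i=7$, observe that the ramification sum is forced except for the five types $(1,2),(1,3),(1,1,2),(1,2,2),(1,1,1,2)$, and then use Propositions~\ref{Esporadicos} and~\ref{Difi} to dispose of $(1,2,2)$ and $(1,2)$. One small caveat: for the type $(1,2)$ you invoke Remark~\ref{ElRemarque} to declare the hypothesis vacuous, but the paper explicitly does \emph{not} prove that remark. It is cleaner (and is what the paper actually does) to argue directly from Proposition~\ref{Difi}: of the three factorization types $\{(1,2),(1,3)\},\{(1,2),(3,2)\},\{(1,2),(5,1)\}$ with sums $4,5,6$, the proposition rules out the last two, so the sum is forced to be $4$ for both $K$ and $K'$.
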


\begin{proof}

Let $(f_{1},...,f_{g})$ be the arithmetic type of $\ell$ in either field. The arithmetic type together with the ramification degrees gives a partition of $7$, $\displaystyle f_{1}e_{1}+...+f_{g}e_{g}=7$, so we analyze each partition of $7$ of size $g$ and see what are the possibilities for sum of the ramification degrees given the knowledge of the arithmetic type.

\begin{itemize}

\item $g=1.$

In this case the ramification degree is completely determined by the value of the residue degree.

\item $g=7.$

\begin{itemize}

\item[$\cdot$] $1+1+1+1+1+1+1$. In this case all the ramification degrees are equal to $1$.

\end{itemize}

\item $g=6.$

\begin{itemize}

\item[$\cdot$] $1+1+1+1+1+2$. In this case all the five ramification degrees are $1$ and the last one is completely determined by its corresponding residue degree.

\end{itemize}

\item $g=5.$ In principle for this case one could have different ramification degrees for same arithmetic types, however the sum of the ramification degrees is the same:

\begin{itemize}

\item[$\cdot$] $1+1+1+1+3$. In this case four residue degrees are $1$, and so they are their corresponding ramification degrees. In either case for this partition the last residue degree determine the last ramification degree. Moreover if the last residue degree is $3$ the sum of the ramification degrees is  $5$ otherwise it's $7$.

\item[$\cdot$] $1+1+1+2+2$. In this case three residue degrees are $1$, and so they are their corresponding ramification degrees. For this partition the knowledge of the arithmetic type determines the remaining ramification degrees (they are $1$ or $2$). On the other hand the only way in which this partition could have the same arithmetic type of the above partition is that all the residue degrees are equal to $1$(four of them are already $1$ and the remaining one must divide $2$ and $3$). In such a case the remaining ramification degrees are equal to $2$ and the sum of the ramification degrees is $7$ which coincides with the previous case. \\
\end{itemize}

\end{itemize}

\noindent For the partitions of size $4,3,2$ we list all the possible candidates to factorization type: \[\{(f_1,...,f_g), (e_1,...,e_g)\} \ \mbox{where} \ f_{1}e_{1}+...f_{g}e_{g}=7. \] We only list possibilities where at least one of the entries in the ramification tuples is bigger than $1$. We finish by collecting the sets with equal arithmetic types such that their ramification tuples add to different values.\\

\begin{itemize}

\item $g=4.$\\

\begin{itemize}

\item[$\cdot$] $1+1+1+4$: \ $\{(1,1,1,1), (1,1,1,4)\}, \{(1,1,1,2), (1,1,1,2)\}.$

\item[$\cdot$] $1+1+2+3$: \ $\{(1,1,1,1), (1,1,2,3)\}, \{(1,1,1,3), (1,1,2,1)\}, \\ \{(1,1,1,2), (1,1,3,1)\}.$ 

\item[$\cdot$] $1+2+2+2$: \ $\{(1,1,1,1), (1,2,2,2)\}, \{(1,1,1,2), (1,2,2,1)\},  \\ \{(1,1,2,2), (1,2,1,1)\}.$ \\

\end{itemize}

In this case we have the pair \[\{(1,1,1,2), (1,1,1,2)\}, \{(1,1,1,2), (1,1,3,1)\}\] with ramification sums equal to $5$ and $6$ respectively and the pair \[\{(1,1,1,2), (1,1,1,2)\}, \{(1,1,1,2), (1,2,2,1)\}\] with the same pattern as the first pair.\\

\item $g=3.$ \\

\begin{itemize}

\item[$\cdot$] $1+1+5$. \ $\{(1,1,1), (1,1,5)\}$. 

\item[$\cdot$] $1+2+4$: \ $\{(1,1,1), (1,2,4)\}, \{(1,1,2), (1,2,2)\}, \{(1,1,2), (1,4,1)\}, \\ \{(1,1,4),(1,2,1)\},  \{(1,2,2), (1,1,2)\}.$

\item[$\cdot$] $1+3+3$: \ $\{(1,1,1), (1,3,3)\}, \{(1,1,3), (1,3,1)\}$.

\item[$\cdot$] $2+2+3$:  \ $\{(1,1,1), (1,2,3)\}, \{(1,1,2), (2,3,1)\}, \{(1,1,3), (2,2,1)\}, \\ \{(1,2,2),(3,1,1)\}, \{(1,2,3), (2,1,1)\}.$ \\

\end{itemize}

In this case we have the pair \[\{(1,1,2), (1,2,2)\}, \{(1,1,2), (1,4,1)\}\] with ramification sums equal to $5$ and $6$ respectively and the pair \[\{(1,1,2), (1,2,2)\}, \{(1,1,2), (2,3,1)\}\] with the same pattern as the first pair. Additionally we have \[\{(1,2,2), (1,1,2)\}, \{(1,2,2), (3,1,1)\}.\] Since non arithmetically solitary septic number fields are ${\rm PSL_{2}}(\F_{7})$ number fields (see for instance \cite{Klingen1} or \cite{Praeger}) it follows from Proposition \ref{Esporadicos} that a number field that is not arithmetically solitary can not have a prime with factorization type equal to  $\{(1,2,2), (3,1,1)\}$. \\

\item $g=2.$\\

\begin{itemize}

\item[$\cdot$] $1+6$: \ $\{(1,1), (1,6)\}, \{(1,2), (1,3)\}, \{(1,3), (1,2)\}$

\item[$\cdot$] $2+5$: \ $\{(1,1), (2,5)\}, \{(1,5), (2,1)\}, \{(1,2), (5,1)\}$

\item[$\cdot$] $3 +4$:  \ $\{(1,1), (3,4)\}, \{(1,2), (3,2)\}, \{(1,3), (4,1)\}, \\ \{(1,4), (3,1)\}, \{(2,3), (2,1)\}.$\\

\end{itemize}

In this case we have the pair \[\{(1,3), (1,2)\}, \{(1,3), (4,1)\}\] with ramification sums equal to $3$ and $5$ respectively and the trio \[\{(1,2), (1,3)\}, \{(1,2), (3,2)\}, \{(1,2), (5,1)\}.\] These last cases are covered thanks to Proposition \ref{Difi}. See also Remark \ref{ElRemarque}. \end{itemize}\end{proof}

Using that not arithmetically solitary septic fields have simple Galois group we narrow the possibilities of prime powers appearing in the discriminant of fields for which Question \ref{LaPregunta} could have a negative answer.

\begin{corollary}\label{ElCoro}
Let $K, K'$ be degree $7$ arithmetically equivalent number fields. Suppose $\ell$ is a rational prime which is not wildly ramified in either $K$ or $K'$, and let $v_{\ell}$ the usual $\ell$-adic valuation. If $\displaystyle e^{K}_{1}+...+e^{K}_{g} \neq e^{K'}_{1}+...+e^{K'}_{g}$ then $v_{\ell}({\rm disc}(O_{K})) \in \{2, 4\}$.%
\end{corollary}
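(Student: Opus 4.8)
The plan is to convert the statement into a parity constraint on the discriminant. First I would note that the hypothesis $e^{K}_{1}+\dots+e^{K}_{g}\neq e^{K'}_{1}+\dots+e^{K'}_{g}$ forces $K\not\cong K'$, so $K$ is not arithmetically solitary; by the classification of non-solitary septic fields its Galois closure has group $G\cong {\rm PSL}_{2}(\F_{7})$. Since $G$ is a non-abelian simple group it is perfect, so the transitive degree-$7$ permutation group it determines through the faithful action on ${\rm Emb}(K)$ has no subgroup of index $2$ and therefore lies in $A_{7}$. By the classical square-discriminant criterion this gives that ${\rm disc}(O_{K})$ is a perfect square, so that $v_{\ell}({\rm disc}(O_{K}))$ is even for every rational prime $\ell$.

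Next I would exploit the tameness hypothesis to make the discriminant valuation depend only on the arithmetic type. Since $\ell$ is not wildly ramified in $K$, every prime $P\mid \ell$ contributes exactly $e^{K}_{P}-1$ to the exponent of $P$ in the different $\mathfrak d_{K/\Q}$, and passing to the norm gives
\[ v_{\ell}({\rm disc}(O_{K}))=\sum_{P\mid \ell} f^{K}_{P}\,(e^{K}_{P}-1)=\Big(\sum_{P\mid\ell} e^{K}_{P} f^{K}_{P}\Big)-\sum_{P\mid\ell} f^{K}_{P} = 7-(f_{1}+\dots+f_{g}), \]
where I used the fundamental identity $\sum_{P} e^{K}_{P} f^{K}_{P}=[K:\Q]=7$. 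Thus the tame valuation is an invariant of the arithmetic type $(f_{1},\dots,f_{g})$ alone. By Theorem \ref{Cases}, the assumed inequality of ramification sums forces this common type to lie in $\{(1,3),(1,1,2),(1,1,1,2)\}$; the displayed formula evaluates on these to $3,3$ and $2$ respectively. Intersecting with the parity constraint of the first paragraph discards the odd value $3$ and leaves $v_{\ell}({\rm disc}(O_{K}))=2$, which in particular lies in $\{2,4\}$. The value $4$ retained in the stated range is the tame valuation attached to the type $(1,2)$, the one remaining arithmetic type admitting unequal ramification sums; it is included for safety but is in fact excluded here by the ${\rm PSL}_{2}(\F_{7})$-analysis of Proposition \ref{Difi} and Remark \ref{ElRemarque}.

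The step I expect to be the crux is the passage to parity: everything rests on knowing that a non-solitary septic field has Galois group exactly ${\rm PSL}_{2}(\F_{7})$ and hence square discriminant, since it is this evenness—rather than the raw list furnished by Theorem \ref{Cases}—that eliminates the odd candidate valuations. The companion input, the tame formula $v_{\ell}=7-\sum_{P} f^{K}_{P}$, is standard but must be applied with care to confirm that it depends on the residue degrees only and not on the individual ramification indices; once both ingredients are in hand, the remaining verification is a short finite check over the admissible arithmetic types.
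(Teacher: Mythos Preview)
Your argument is correct and follows the same route as the paper: use Theorem \ref{Cases} to pin down the admissible arithmetic types, apply the tame discriminant formula $v_{\ell}({\rm disc}(O_{K}))=7-\sum_{i} f_{i}$, and then cut down by parity using that a non-solitary septic field has Galois group ${\rm PSL}_{2}(\F_{7})\subset A_{7}$ and hence square discriminant. In fact you are slightly sharper than the paper's statement: you correctly observe that the three surviving types give $v_{\ell}\in\{2,3\}$, so after imposing evenness only $v_{\ell}=2$ remains, and that the value $4$ in the stated range corresponds to the type $(1,2)$ which Proposition \ref{Difi} and Remark \ref{ElRemarque} already exclude for ${\rm PSL}_{2}(\F_{7})$ fields.
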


\begin{proof}
Thanks to Theorem \ref{Cases} we see that the sum of the inertia degrees, at every prime $\ell$, in either field is either $3$,$4$ or $5$. Since for non wildly ramified primes $v_{\ell}({\rm disc}(O_{K}))=[K:\Q]-(f_{1}+...+f_{g})$ we see that $v_{\ell}({\rm disc}(O_{K})) \in \{2,3,4\}$. On the other hand a septic field with simple Galois group must have square discriminant since its Galois closure embeds in $A_{7}$, hence the result.
\end{proof}

\begin{remark}
From Theorem \ref{Cases} we see that the only primes that could give a negative answer to Question \ref{LaPregunta} and that are wildly ramified in both fields are $2$ and $3$. For instance, if $\ell=2$ a similar argument as the above shows that $v_{\ell}({\rm disc}(O_{K})) \in \{6,8\}$.
\end{remark}

Finally we show that Theorem \ref{Cases} is the best we can get in terms of Perlis and Stuart's question:

\begin{theorem}\label{Larespuesta}
For each tuple $\mathcal{F} \in \{(1,3), (1,1,2), (1,1,1,2)\}$ there are examples of pairs $(K,K')$ of non-isomorphic arithmetically equivalent number fields, and a prime $\ell$, with common arithmetic type $\mathcal{F}$ in $K$ and $K'$ such that \[e^{K}_{1}+...+e^{K}_{g} \neq e^{K'}_{1}+...+e^{K'}_{g}.\] 
\end{theorem}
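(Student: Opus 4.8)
The statement is existential, so the plan is to exhibit, for each of the three tuples, one explicit pair of non-isomorphic arithmetically equivalent septic fields together with a prime realizing the required factorization. The group-theoretic skeleton is fixed once and for all: inside $G={\rm PSL}_{2}(\F_{7})$ the index-$7$ subgroups form exactly two conjugacy classes, represented by the non-conjugate copies $H_{1},H_{2}\cong S_{4}$ appearing in the subgroup lattice above. These are quasi-conjugate (they are the point- and line-stabilizers under the identification $G\cong {\rm GL}_{3}(\F_{2})$ acting on the Fano plane), so by Lemma \ref{LemmaQuasi} and Corollary \ref{ZetaQuasiConj} the fixed fields $K=L^{H_{1}}$ and $K'=L^{H_{2}}$ of any Galois $L/\Q$ with group $G$ are arithmetically equivalent; since $H_{1}$ and $H_{2}$ are not conjugate, $K\not\cong K'$ as fields.

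The second step is to read off, for each target arithmetic type, the local configuration that forces distinct ramification sums. This is the finite bookkeeping already recorded in the proof of Theorem \ref{Cases}: for $\mathcal{F}=(1,3)$ one seeks a prime whose factorization type is $\{(1,3),(1,2)\}$ in one field and $\{(1,3),(4,1)\}$ in the other, giving ramification sums $3$ and $5$; for $\mathcal{F}=(1,1,2)$ one wants $\{(1,1,2),(1,2,2)\}$ against $\{(1,1,2),(1,4,1)\}$ or $\{(1,1,2),(2,3,1)\}$, with sums $5$ and $6$; and for $\mathcal{F}=(1,1,1,2)$ one wants $\{(1,1,1,2),(1,1,1,2)\}$ against $\{(1,1,1,2),(1,1,3,1)\}$ or $\{(1,1,1,2),(1,2,2,1)\}$, again with sums $5$ and $6$. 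Each such pair corresponds to a choice of inertia group $I\trianglelefteq D$ inside $G$ whose orbits on the coset spaces $G/H_{1}$ and $G/H_{2}$ produce precisely these two factorizations, and checking that an admissible $(I,D)$ exists is a direct inspection of the lattice of ${\rm PSL}_{2}(\F_{7})$.

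The third and decisive step is realization by genuine number fields. Group-theoretic admissibility of $(I,D)$ does not by itself guarantee a global field realizing it, so I would search rather than argue abstractly: using Corollary \ref{ElCoro} and the remark following it to pin the discriminant to the shape $2^{2a}p^{2b}$ with $a\in\{3,4\}$ and $b\in\{1,2\}$, one scans Jones' database for septic fields of this discriminant and matching signature, keeps the pairs whose ramification type at $2$ or $p$ lies in the target list, retains those with Galois group ${\rm PSL}_{2}(\F_{7})$, and confirms arithmetic equivalence via Theorem \ref{DisjointPrime}. For each surviving pair one factors $\ell$ in both maximal orders and records the two factorization types; displaying one pair of defining polynomials per value of $\mathcal{F}$, together with this factorization data, completes the proof.

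I expect the realization step to be the only genuine obstacle. The first two steps are combinatorics inside a single finite group, but there is no a priori reason that a locally admissible pair $(I,D)$ is globally realized, still less within a computable discriminant range; the entire content of the theorem is that such fields do in fact occur, and this can only be secured by the explicit examples returned by the search. A secondary point requiring care is certifying, for each displayed pair, both the non-isomorphism and the genuine arithmetic equivalence, which reduces to checking that the common Galois closure has group $G$ and that the two fields correspond to the two non-conjugate copies of $S_{4}$.
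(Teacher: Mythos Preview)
Your proposal is correct and follows essentially the same approach as the paper: the paper's proof is precisely the execution of the search you describe, exhibiting two explicit pairs of septic polynomials (with discriminants $2^{6}\cdot 691^{2}$ and $2^{8}\cdot 7^{8}$), certifying arithmetic equivalence via Theorem~\ref{DisjointPrime} by checking $[KK':\Q]\le 28$, and listing the factorization types at $\ell=2$ and $\ell=691$ to cover all three tuples $\mathcal{F}$. As you correctly anticipate, the entire content is the explicit data, so your plan becomes a proof only once those polynomials and factorizations are actually displayed.
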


\begin{proof} 
For $i=1,2$ consider the pairs of septic number fields ($K_{i}$, $K'_{i}$) defined by the ${\rm following}$ pairs of polynomials $(f_{i}, g_{i})$ respectively:\\
\begin{itemize}

\item $f_{1}:=x^7 - 3x^6 + 4x^5 - 5x^4 + 3x^3 - x^2 - 2x + 1$ and $g_{1}:=x^7 - x^5 - 2x^4 - 2x^3 + 2x^2 - x + 4$.\\

\item $f_{2}:=x^7 - 7x^5 - 14x^4 - 7x^3 - 7x + 2$ and $g_{2}:=x^7 - 14x^3 - 14x^2 + 7x + 22 $.\\

\end{itemize}

The first two fields have discriminant $2^{6}691^{2}$ and the second two have discriminant $2^{8}7^{8}$. A calculation, done in MAGMA, shows that $[K_{i}K'_{i}: \Q] \leq 28$. Since the fields have prime degree over $\Q$ it follows,  thanks to Theorem \ref{DisjointPrime} below, that $K_{i}$ and $K'_{i}$ are arithmetically equivalent. \\ 

\noindent For the given prime $\ell$, and the given field, the factorization type $\displaystyle \{(f_1,...,f_g), (e_1,...,e_g)\}$ is:

\begin{enumerate}

\item $\ell =2$ 
          \begin{itemize}
          
          \item[(a)]  $\displaystyle K_{1}; \ \{(1,3), (4,1)\}.$
          
          \item [(b)]  $\displaystyle K'_{1}; \ \{(1,3), (1,2)\}.$ \\
          
          \end{itemize}
          
\item $\ell =691$ 
          \begin{itemize}
          
          \item[(a)]  $\displaystyle K_{1}; \{(1,1,1,2), (1,1,1,2)\}.$
          
          \item [(b)]  $\displaystyle K'_{1}; \{(1,1,1,2), (2,1,2,1)\}.$\\
          
          \end{itemize}          

\item $\ell =2$ 
          \begin{itemize}
          
          \item[(a)] $\displaystyle K_{2}; \ \{(1,1,2), (1,4,1)\}$.
          
          \item [(b)] $\displaystyle K'_{2}; \ \{(1,1,2), (1,2,2)\}.$
          
          \end{itemize}

\end{enumerate}

\end{proof}

\begin{theorem}[Perlis \cite{Perlis2}]\label{DisjointPrime}
Let $K, K'$ be two number fields. Suppose that they have the same prime degree over $\Q$. Then $K$ and $K'$ are arithmetically equivalent if and only if they are linearly disjoint.
\end{theorem}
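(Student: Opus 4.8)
The plan is to translate both sides into the group theory of the compositum and reduce to a statement about double cosets. Fix a Galois extension $N/\Q$ containing $KK'$ (for instance the compositum of the two Galois closures), put $G={\rm Gal}(N/\Q)$, and let $H={\rm Gal}(N/K)$ and $H_{1}={\rm Gal}(N/K')$, so that $[G:H]=[G:H_{1}]=p$. Two translations are immediate. First, by Corollary \ref{ZetaQuasiConj} together with Lemma \ref{LemmaQuasi}, the fields $K$ and $K'$ are arithmetically equivalent precisely when ${\rm Ind}_{H}^{G}1_{H}\cong{\rm Ind}_{H_{1}}^{G}1_{H_{1}}$. Second, since ${\rm Gal}(N/KK')=H\cap H_{1}$, we have $[KK':\Q]=[G:H\cap H_{1}]$, and the identity $|HH_{1}|=|H||H_{1}|/|H\cap H_{1}|$ (with $|H|=|H_{1}|=|G|/p$) shows that $K$ and $K'$ are linearly disjoint if and only if $HH_{1}=G$, equivalently if and only if there is a single $(H,H_{1})$-double coset in $G$. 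Thus the assertion becomes a purely representation-theoretic dichotomy, and the operative content is that arithmetic equivalence corresponds exactly to the inequality $[KK':\Q]<p^{2}$, i.e.\ to the \emph{failure} of linear disjointness, which is precisely how the criterion is used in the proof of Theorem \ref{Larespuesta}.

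The engine of the argument is the double-coset count. Writing ${\rm Ind}_{H}^{G}1_{H}=1_{G}+\theta_{H}$ and ${\rm Ind}_{H_{1}}^{G}1_{H_{1}}=1_{G}+\theta_{H_{1}}$, where $\theta_{H},\theta_{H_{1}}$ are characters of degree $p-1$, Frobenius reciprocity and Mackey's formula give
\[\#\big(H\backslash G/H_{1}\big)=\big\langle {\rm Ind}_{H}^{G}1_{H},\,{\rm Ind}_{H_{1}}^{G}1_{H_{1}}\big\rangle=1+\langle\theta_{H},\theta_{H_{1}}\rangle.\]
Hence linear disjointness ($\#(H\backslash G/H_{1})=1$) is equivalent to $\langle\theta_{H},\theta_{H_{1}}\rangle=0$, while its failure is equivalent to $\theta_{H}$ and $\theta_{H_{1}}$ sharing an irreducible constituent.

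To upgrade ``sharing a constituent'' to genuine equality I would exploit that $p$ is prime. The action of $G$ on $G/H$ is transitive of prime degree $p$, hence primitive, so by Burnside's theorem on transitive permutation groups of prime degree the induced faithful action of ${\rm Gal}(\widetilde{K}/\Q)$ is either $2$-transitive or permutation-isomorphic to a subgroup of ${\rm AGL}_{1}(\F_{p})$ (a Frobenius group with regular normal subgroup of order $p$). In the $2$-transitive case $\theta_{H}$ is irreducible of degree $p-1$, and similarly for $\theta_{H_{1}}$; then $\langle\theta_{H},\theta_{H_{1}}\rangle\in\{0,1\}$, equal to $1$ exactly when $\theta_{H}=\theta_{H_{1}}$, i.e.\ exactly when ${\rm Ind}_{H}^{G}1_{H}\cong{\rm Ind}_{H_{1}}^{G}1_{H_{1}}$. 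Combined with the count above, failure of linear disjointness becomes equivalent to arithmetic equivalence. One implication is also available by a clean order argument: if $K,K'$ are arithmetically equivalent their Galois closures coincide with $N$, so $G\hookrightarrow S_{p}$ and $p$ divides $|G|$ exactly once; thus $p\nmid|H|$, and $HH_{1}=G$ would force $|H\cap H_{1}|=|H|/p\notin\Z$, so the fields cannot be linearly disjoint.

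The \emph{main obstacle} is the remaining affine alternative of Burnside's dichotomy, where $\theta_{H}$ need not be irreducible and the representation-theoretic step above breaks down. Here the point stabilizers of the faithful degree-$p$ action are all conjugate, so $K$ is arithmetically solitary and any admissible $K'$ is a conjugate of $K$; in this degenerate regime I would verify directly that conjugate subfields of prime degree are never linearly disjoint (again because $p$ exactly divides $|{\rm Gal}(\widetilde{K}/\Q)|$, whence $HH_{1}\neq G$), so that both sides of the equivalence hold simultaneously and the biconditional survives. Assembling the $2$-transitive case, the affine (solitary) case, and the isomorphic case then yields the stated criterion.
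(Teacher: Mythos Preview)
The paper does not prove Theorem \ref{DisjointPrime}; it is quoted from \cite{Perlis2} without argument, so there is no in-paper proof to compare against. Two remarks on your proposal are nevertheless in order.

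First, you are right that the statement as printed is inverted: the way it is invoked in the proof of Theorem \ref{Larespuesta} (concluding arithmetic equivalence from $[K_iK_i':\Q]\le 28<49$) and elementary sanity checks (e.g.\ $K=K'$) show the intended criterion is ``arithmetically equivalent $\Longleftrightarrow$ \emph{not} linearly disjoint,'' i.e.\ $[KK':\Q]<p^{2}$. Your double-coset identity $\#(H\backslash G/H_{1})=1+\langle\theta_H,\theta_{H_1}\rangle$ and the order argument giving ``arithmetically equivalent $\Rightarrow$ not linearly disjoint'' via $p\,\|\,|{\rm Gal}(\widetilde{K}/\Q)|$ are both correct and clean.

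Second, there is a genuine gap in the converse direction in your affine branch. Your treatment there only runs the implication ``arithmetically equivalent $\Rightarrow K'\cong K\Rightarrow$ not linearly disjoint,'' which is the direction you already had; you never deduce arithmetic equivalence from the failure of linear disjointness when $\theta_H$ is reducible. The missing step is the reduction $K'\subseteq\widetilde{K}$: since $\widetilde{K}/\Q$ is Galois and $[K':\Q]=p$ is prime, $K'\cap\widetilde{K}\in\{\Q,K'\}$; if it were $\Q$ then $K'$ and $\widetilde{K}$ (hence $K'$ and $K$) would be linearly disjoint. Once $K'\subseteq\widetilde{K}$, both $H$ and $H_{1}$ are index-$p$ subgroups of $G_{1}={\rm Gal}(\widetilde{K}/\Q)\hookrightarrow S_{p}$; now either your $2$-transitive argument applies (and note that $\theta_H$ irreducible of degree $p-1$ together with $\langle\theta_H,\theta_{H_1}\rangle\ge 1$ already forces $\theta_H=\theta_{H_1}$ by degree, so you need not assume $\theta_{H_1}$ irreducible ``similarly''), or $G_{1}\le{\rm AGL}_1(\F_p)$ and all index-$p$ subgroups are conjugate complements to the normal Sylow $p$-subgroup, giving $K'\cong K$. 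With this reduction inserted, your outline becomes a complete proof of the (corrected) statement.
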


\noindent
{\footnotesize Department of Mathematics, Universidad Konrad Lorenz,
Bogot\'a, Colombia ({\tt gmantelia@gmail.com})}

\end{document}